\def\rr{{\mathbb R}}
\def\rn{{{\rr}^n}}
\def\zz{{\mathbb Z}}
\def\nn{{\mathbb N}}
\def\cc{{\mathbb C}}
\def\cg{{\mathcal G}}
\def\cm{{\mathcal M}}
\def\cmn{{\Phi^\ast}}
\def\cmr{{\Phi^+}}
\def\cmg{{G^{(\ez,\,\bz,\,\gz)}}}
\def\fz{\infty}
\def\az{\alpha}
\def\supp{{\mathop\mathrm{\,supp\,}}}
\def\lz{\lambda}
\def\dz{\delta}
\def\ez{\epsilon}
\def\bz{\beta}
\def\gz{{\gamma}}
\def\bgz{{\Gamma}}
\def\vz{\varphi}
\def\pa{\partial}
\def\wz{\widetilde}
\def\ls{\lesssim}
\def\gs{\gtrsim}
\def\tbz{{\triangle_\lz}}
\def\dmz{{dm_\lz}}
\def\zlz{{Z^{[\lz]}}}
\def\tlz{{\tau^{[\lz]}_x}}
\def\jz{{J_\nu}}
\def\riz{{R_{\Delta_\lz}}}
\def\triz{{\tilde R_{\Delta_\lz}}}
\def\slz{{\sharp_\lz}}
\def\qlz{{Q^{[\lz]}_t}}
\def\plz{{P^{[\lz]}_t}}
\def\ati{\mathop\mathrm{AOTI}}
\def\ocg{{\mathring{\cg}}}
\def\bmoz{{\rm BMO}((0, \fz),\, dm_\lz)}
\def\ltz{{L^2((0,\, \fz),\, dm_\lz)}}
\def\lrz{{L^r((0,\, \fz),\, dm_\lz)}}
\def\lrpz{{L^{r'}((0,\, \fz),\, dm_\lz)}}
\def\loz{{L^1((0,\, \fz),\, dm_\lz)}}
\def\lpz{{L^p((0,\, \fz),\, dm_\lz)}}
\def\linz{{L^\fz((0,\, \fz),\, dm_\lz)}}
\def\hpz{{H^p((0,\, \fz),\, dm_\lz)}}
\def\hoz{{H^1((0,\, \fz),\, dm_\lz)}}
\def\lp{{L^p(\rn)}}
\def\dint{\displaystyle\int}
\def\dlimsup{\displaystyle\limsup}
\def\dfrac{\displaystyle\frac}
\def\dsup{\displaystyle\sup}
\def\dlim{\displaystyle\lim}
\def\r{\right}
\def\lf{\left}
\newtheorem{thm}{Theorem}[section]
\newtheorem{lem}{Lemma}[section]
\newtheorem{rem}{Remark}[section]
\newtheorem{cor}{Corollary}[section]
\newtheorem{defn}{Definition}[section]
\numberwithin{equation}{section}
\begin{document}

\arraycolsep=1pt

\title{{\vspace{-5cm}\small\hfill\bf Anal. Appl. (Singap.) (to appear)}\\
\vspace{4.5cm}\Large\bf REAL-VARIABLE CHARACTERIZATIONS OF HARDY SPACES
ASSOCIATED WITH BESSEL OPERATORS}
\author{DACHUN YANG\\
\it \footnotesize School of Mathematical Sciences, Beijing Normal University,\\
\it\footnotesize Laboratory of Mathematics and Complex Systems, Ministry of Education, \\
\it\footnotesize Beijing 100875, People's Republic of China.\\
\vspace{0.6cm}\it\footnotesize dcyang@bnu.edu.cn\\
DONGYONG YANG\\
\it \footnotesize School of Mathematical Sciences, Xiamen University,\\
\it\footnotesize Xiamen 361005, People's Republic of China.\\
{\it\footnotesize dyyangbnu@yahoo.com.cn}}
\date{ }
\maketitle

\begin{center}
\begin{minipage}{13.5cm}\small
{\noindent Let $\lambda>0$, $p\in((2\lz+1)/(2\lz+2), 1]$,  and
$\triangle_\lambda\equiv-\frac{d^2}{dx^2}-\frac{2\lambda}{x}
\frac d{dx}$ be the Bessel operator. In this paper, the authors
establish the characterizations of atomic Hardy spaces
$H^p((0, \infty), dm_\lambda)$ associated with $\triangle_\lambda$
in terms of the radial maximal function, the nontangential maximal function,
the grand maximal function, the Littlewood-Paley $g$-function and
the Lusin-area function,
where $dm_\lambda(x)\equiv x^{2\lambda}\,dx$.
As an application, the authors further obtain the Riesz transform
characterization of these Hardy spaces.}

\medskip

 {\it Keywords}: Hardy space; Bessel operator; maximal function;
 Riesz transform; Littlewood-Paley $g$-function;
Lusin-area function.

\medskip

{Mathematics Subject Classification 2010:} {42B30, 42B25, 42B35}
\end{minipage}
\end{center}

\section{\hspace{-0.6cm}.\hspace{0.4cm}Introduction\label{s1}}

\noindent It is well known that
the real-variable theory of Hardy spaces on the
$n$-dimensional Euclidean space $\rn$ plays an important
role in harmonic analysis and has been systematically developed;
see \cite{fs72,g08,s93,sw60}. The classical Hardy spaces on $\rn$
are essentially related to the Laplacian $\triangle\equiv
-\sum_{k=1}^n\frac{\pa^2}{\pa x_k^2}$.

Let $\lz\in(0, \fz)$ and $\tbz$ be the Bessel operator,
which is defined by setting, for all $C^2$-functions $f$ on $(0, \fz)$
and $x\in (0, \fz)$,
\begin{equation*}
\tbz f(x)\equiv-\frac{d^2}{dx^2}f(x)-\frac{2\lz}{x}\frac{d}{dx}f(x).
\end{equation*}
In 1965, Muckenhoupt and Stein \cite{ms} developed a theory parallel
to the classical case associated to $\triangle$ in the setting of
$\tbz$, in which the results on $\lpz$-boundedness of conjugate
functions and fractional integrals associated with $\tbz$ were
obtained, where $p\in[1, \fz)$ and $\dmz(x)\equiv x^{2\lz}\,dx$.
Since then, many problems based on the Bessel context were studied;
see, for example, \cite{ak,bcfr,bfbmt,bfs,bhnv,k78,s85,v08}. In
particular, Betancor et al. in \cite{bdt} established the
characterizations of the atomic Hardy space $H^1((0, \fz), \dmz)$
associated to $\tbz$ in terms of the Riesz transform and the radial
maximal function associated with the Hankel convolution of a class
$\zlz$ of functions, which includes the Poisson semigroup and the heat
semigroup as special cases.

Let $p\in((2\lz+1)/(2\lz+2), 1]$. The main target of this paper is,
via using the results from \cite{hmy06, hmy}, to establish the
characterizations of the atomic Hardy spaces $\hpz$ in terms of the
radial maximal function, the nontangential maximal function, the grand
maximal function, the Littlewood-Paley $g$-function and
the Lusin-area function. As an application,
we further obtain the Riesz transform
characterization of these Hardy spaces.

To state our main results, we first recall some necessary notions and
notation.
Throughout this paper, we assume that $\lz\in (0, \fz)$. Let
$\bgz$ and $\jz$ respectively denote the {\it Gamma function} and
the {\it Bessel function of the first kind of order $\nu$} with $\nu\in(-1/2, \fz)$.
For any $f$ and $g\in \loz$, their {\it Hankel convolution} is defined by
setting, for all $x\in (0,\fz)$,
\begin{equation}\label{1.1}
f\sharp_\lz g(x)\equiv\dint_0^\fz f(y)\tlz g(y)\dmz(y),
\end{equation}
where for $x\in (0, \fz)$, $\tlz g(y)$
denotes the {\it Hankel translation} of $g(y)$, that is,
\begin{equation}\label{1.2}
\tlz g(y)\equiv\frac{\bgz(\lz+1/2)}{\bgz(\lz)\sqrt{\pi}}
\dint_0^\pi g\lf(\sqrt{x^2+y^2-2xy\cos \theta}\r)(\sin\theta)^{2\lz-1}\,d\theta.
\end{equation}

In what follows, for any $x$, $r\in (0, \fz)$, let the {\it symbol}
$I(x, r)\equiv(x-r, x+r)\cap(0, \fz)$. It is easy to see that
\begin{equation}\label{1.3}
m_\lz(I(x, r))\sim\left\{
                    \begin{array}{ll}
                      x^{2\lz}r,\,\, & \hbox{$x>r$;} \\
                      r^{2\lz+1},\,\, & \hbox{$x\le r$.}
                    \end{array}
                  \right.
\end{equation}
This yields that
\begin{equation}\label{1.4}
m_\lz(I(x, r))\sim x^{2\lz}r+r^{2\lz+1},
\end{equation}
which further implies that
\begin{equation}\label{1.5}
2m_\lz(I(x, r))\ls m_\lz(I(x, 2r))\ls 2^{2\lz+1}m_\lz(I(x, r)).
\end{equation}
Thus, $((0, \fz), \rho, dm_\lz)$ is an RD-space introduced in \cite{hmy},
where $\rho(x,y)\equiv|x-y|$ for all $x,\,y\in(0, \fz)$.
We now recall the notion of approximations of the identity in the
context of RD-spaces, which was introduced in \cite{hmy} (see also \cite{hmy06}).

\begin{defn}\label{d1.1}\rm
Let $\ez_1\in(0, 1]$ and $\ez_2$, $\ez_3\in(0, \fz)$.
A sequence $\{S_k\}_{k\in\zz}$ of bounded linear integral operators on
$\ltz$ is called an {\it approximation of the identity
of order $(\ez_1, \ez_2, \ez_3)$}
(in short, $(\ez_1, \ez_2, \ez_3)-\ati$),
if there exists a positive constant $\wz C$ such that for all $k\in\zz$ and
$x,\,y\in (0, \fz)$, $S_k(x, y)$, the integral kernel of $S_k$ is a
 measurable function from $(0, \fz)\times
(0, \fz)$ into $\cc$ satisfying that
\begin{enumerate}

\item[(i)] for all $k\in\zz$ and $x,\,y\in (0, \fz)$,
\begin{equation*}
|S_k(x,\,y)|\le \wz C\frac 1{m_\lz(I(x, 2^{-k})) +m_\lz(I(y, 2^{-k}))+
m_\lz(I(x, |x-y|))}\frac {2^{-k\ez_2}}{(2^{-k}+|x-y|)^{\ez_2}};
\end{equation*}

\vspace{-0.3cm}

\item[(ii)] for all $k\in\zz$ and
$x,\,\wz x,\,y\in (0, \fz)$ with $|x-\wz x|\le (2^{-k}+|x-y|)/2$,
 \begin{eqnarray*}
 &&|S_k(x,\,y)-S_k(\wz x,\,y)|\\
 &&\quad\le
 \wz C\frac 1{m_\lz(I(x, 2^{-k})) +m_\lz(I(y, 2^{-k}))+
m_\lz(I(x, |x-y|))}\frac {|x-\wz x|^{\ez_1}2^{-k\ez_2}}{(2^{-k}+|x-y|)^{\ez_1+\ez_2}};
\end{eqnarray*}

\vspace{-0.3cm}

\item[(iii)] property (ii) also holds with $x$ and $y$ interchanged;

\vspace{-0.3cm}

\item[(iv)] for all $k\in\zz$ and $x,\,\wz x,\,y,\,\wz y\in (0, \fz)$
with $|x-\wz x|\le (2^{-k}+|x-y|)/3$ and $|y-\wz y|\le (2^{-k}+|x-y|)/3$,
\begin{equation*}
\begin{array}[b]{cl}
&\lf|[S_k(x,\,y)-S_k(x,\,\wz y)]-[S_k(\wz x,\,y)-S_k(\wz x,\,\wz y)]\r|\\
&\quad \le \wz C\dfrac 1{m_\lz(I(x, 2^{-k})) +m_\lz(I(y, 2^{-k}))+
m_\lz(I(x, |x-y|))}\frac {|x-\wz x|^{\ez_1}|y-\wz y|^{\ez_1}2^{-k\ez_3}}
{(2^{-k}+|x-y|)^{2\ez_1+\ez_3}};
\end{array}
\end{equation*}

\vspace{-0.3cm}

\item[(v)] for all $k\in\zz$ and $x\in (0, \fz)$,
$\int_0^\fz S_k(x,z)\,\dmz(z)=1=
\int_0^\fz S_k(z,x)\,\dmz(z).$
\end{enumerate}
\end{defn}

\begin{rem}\label{r1.1}\rm
Similarly to \cite[Remark 2.1(ii)]{yz}, if a
sequence $\{\wz S_t\}_{t>0}$ of bounded linear integral
operators on $\ltz$ satisfies (i) through (v) of Definition \ref{d1.1}
 with $2^{-k}$ replaced by $t$, then we call $\{\wz S_t\}_{t>0}$
 a {\it continuous approximation of the identity of $(\ez_1, \ez_2, \ez_3)$}
 (in short, continuous $(\ez_1, \ez_2, \ez_3)-\ati$).
 For example, if $\{S_k\}_{k\in\zz}$
 is an $(\ez_1, \ez_2, \ez_3)-\ati$ and if we set $\wz S_t(x, y)\equiv
 S_k(x, y)$ for $t\in(2^{-k-1}, 2^{-k}]$ with $k\in\zz$, then
 $\{\wz S_t\}_{t>0}$ is a continuous $(\ez_1, \ez_2, \ez_3)-\ati$.
\end{rem}

The following space of test functions was introduced in \cite{hmy}; see
also \cite{hmy06}.

\begin{defn}\rm\label{d1.2}
Let $x_1\in (0, \fz)$, $r\in (0, \fz)$, $\bz\in (0, 1]$
 and $\gz\in (0, \fz)$. A function $\phi$ on $(0, \fz)$
 is said to belong to the {\it space of test functions},
 $\cg(x_1, r, \bz, \gz)$, if
there exists a positive constant $C$ such that

${\rm (G)_i}\ |\phi(x)|\le C\frac1{m_\lz(I(x,\,r+|x-x_1|))}
\lf(\frac r{r+|x-x_1|}\r)^\gz$
 for all $x\in (0, \fz)$;

${\rm (G)_{ii}}\ |\phi(x)-\phi(y)|\le C
\lf(\frac{|x-y|}{r+|x_1-x|}\r)^\bz\frac1{m_\lz(I(x,\,r+|x-x_1|))}
\lf(\frac r{r+|x_1-x|}\r)^\gz$ for all  $x$, $y\in(0, \fz)$
satisfying that $|x-y|\le (r+|x_1-x|)/2$.

Moreover, for any $f\in \cg(x_1, r, \bz, \gz)$, its {\it norm} is defined by
\begin{equation*}
\|f\|_{\cg(x_1,\, r,\,\bz,\,\gz)}\equiv\inf\{C:\
{\rm (G)_i}\ {\rm and}\ {\rm (G)_{ii}}\ {\rm hold}\}.
\end{equation*}
\end{defn}

\begin{rem}\rm\label{r1.2}
(i) Let $\{S_t\}_{t>0}$ be an $(\ez_1, \ez_2, \ez_3)-\ati$ for
some positive constants $\ez_1$, $\ez_2$ and $\ez_3$, and
$S_t(x, y)$ be the kernel of $S_t$. Obviously, $S_t(x, \cdot)$ for
any fixed $t$ and $x\in(0, \fz)$
is a test function of type $(x, t, \ez_1, \ez_2)$, and $S_t(\cdot, y)$ for any
fixed $t$ and $y\in(0, \fz)$ is a test function of type $(y, t, \ez_1, \ez_2)$; see
also \cite[p.\,19]{hmy}.

(ii) For any $x\in(0, \fz)$, $1+|x-1|\sim 1+x$.
By this fact together with \eqref{1.4},
if we take $x_1\equiv1$ and $r\equiv1$ in Definition \ref{d1.2},
we have that if $\phi\in \cg(1, 1, \bz, \gz)$, then

${\rm (G)_{i'}}\ |\phi(x)|\le C\frac1{(1+x)^{2\lz+1+\gz}}$
 for all $x\in (0, \fz)$;

${\rm (G)_{ii'}}\ |\phi(x)-\phi(y)|\le C
\frac{|x-y|^\bz}{(1+x)^{2\lz+1+\gz+\bz}}$ for all  $x$, $y\in(0, \fz)$
satisfying that $|x-y|\le (1+|x-1|)/2$.
\end{rem}

The {\it space} $\ocg(x, r, \bz, \gz)$ is defined to be the set of all functions
$f\in \cg(x, r, \bz, \gz)$ such that $\int_0^\fz f(x)\,dm_\lz(x)=0$. Moreover,
we endow the space $\ocg(x, r, \bz, \gz)$ with the same norm as the space
$\cg(x, r, \bz, \gz)$.

The space $\cg(x, r, \bz, \gz)$ is a Banach space. Let
$\ez\in(0, 1]$ and $\bz,\,\gz\in (0, \ez]$. We
further define the {\it space} $\cg^\ez_0(x, r, \bz, \gz)$
to be the completion of the set $\cg(x, r, \ez, \ez)$ in $\cg(x, r, \bz, \gz)$.
For $f\in \cg^\ez_0(x, r, \bz, \gz)$, define
$\|f\|_{\cg^\ez_0(x,\, r,\, \bz,\, \gz)}\equiv\|f\|_{\cg(x,\, r,\, \bz,\, \gz)}$.
Let $(\cg^\ez_0(x, r, \bz, \gz))'$ be the {\it set of all continuous linear
functionals on $\cg^\ez_0(x, r, \bz, \gz)$}
endowed with the weak$^\ast$ topology,
and denote by $\langle f, \vz\rangle$ the natural pairing of elements
$f\in (\cg^\ez_0(x, r, \bz, \gz))'$ and $\vz\in \cg^\ez_0(x, r, \bz, \gz)$.
Throughout this paper, we fix $x_1\equiv1$ and write
$\cg^\ez_0(\bz, \gz)\equiv\cg^\ez_0(1, 1, \bz, \gz),$
and $(\cg^\ez_0(\bz, \gz))'\equiv(\cg^\ez_0(1, 1, \bz, \gz))'.$

Similarly, let the {\it space} $\ocg^\ez_0(x, r, \bz, \gz)$ be the
completion of the set $\ocg(x, r, \ez, \ez)$  in the space $\ocg(x, r, \bz, \gz)$.
For any $f\in \ocg^\ez_0(x, r, \bz, \gz)$, define
$\|f\|_{\ocg^\ez_0(x,\, r,\, \bz,\, \gz)}\equiv \|f\|_{\ocg(x,\, r,\, \bz,\, \gz)}$.
Denote by $(\ocg^\ez_0(x, r, \bz, \gz))'$ the {\it space of all continuous
linear functionals on $\ocg^\ez_0(x, r, \bz, \gz)$}, and
endow $(\ocg^\ez_0(x, r, \bz, \gz))'$ with the weak$^\ast$ topology. We
always write $\ocg^\ez_0(\bz, \gz)\equiv\ocg^\ez_0(1, 1, \bz, \gz)$.
See \cite{hmy} or \cite{hmy06} for the details. Moreover, it was proved
in \cite{yz09} that for any $\ez,\,\wz\ez\in(0, 1)$
and $\bz,\,\gz\in (0, \min\{\ez,\wz\ez\})$,
the spaces $\cg^\ez_0(\bz, \gz)=\cg^{\wz\ez}_0(\bz, \gz)$ and
$\ocg^\ez_0(\bz, \gz)=\ocg^{\wz\ez}_0(\bz, \gz).$

We now recall the atomic Hardy spaces $\hpz$ in \cite{hmy}; see also
\cite{cw77}.

\begin{defn}\label{d1.3}\rm
Let $p\in ((2\lz+1)/(2\lz+2), 1]$, $\ez\in (0, 1)$ and
$\bz$, $\gz\in (0, \ez)$. A function $a$ is called an
{\it $\hpz$-atom} if there exists an open bounded interval $I\subset (0,\fz)$
such that $\supp(a)\subset I$, $\|a\|_\ltz\le[m_\lz(I)]^{1/2-1/p}$
and $\int_0^\fz a(x)\,\dmz(x)=0$.

A distribution $f\in (\cg^\ez_0(\bz, \gz))'$ is said to belong to
the {\it Hardy space} $\hpz$ if
$f=\sum_{j=1}^\fz\az_j a_j$ in $(\cg^\ez_0(\bz, \gz))'$, where for every $j$,
$a_j$ is an $\hpz$-atom and $\az_j\in\cc$ satisfying that
$\sum_{j=1}^\fz|\az_j|^p<\fz$. The {\it norm} $\|f\|_\hpz$ of
$f$ in $\hpz$ is defined by
$\|f\|_\hpz\equiv\inf\{(\sum_{j=1}^\fz|\az_j|^p)^{1/p}\},$
where the infimum is taken over all the decompositions of $f$ as above.
\end{defn}

The following class $\zlz$ of functions is a slight variant
of the corresponding class introduced in \cite{bdt}.

\begin{defn}\label{d1.4}\rm
Let $\zlz$ be the set of all $C^2$-functions $\phi$ on
$[0, \fz)$ such that for all $x\ge0$,
\begin{equation}\label{1.6}
0\le \phi(x)\le C(1+x^2)^{-\lz-1},
\end{equation}
\begin{equation}\label{1.7}
\lf|\phi'(x)\r|\le Cx(1+x^2)^{-\lz-2},
\end{equation}
and
\begin{equation}\label{1.8}
\lf|\phi''(x)\r|\le C(1+x^2)^{-\lz-2}.
\end{equation}
\end{defn}

We now recall the radial maximal function, the nontangential maximal function,
the grand maximal function, the Littlewood-Paley $g$-function
and the Lusin-area function in \cite{hmy}.

\begin{defn}\label{d1.5}\rm
Let $\phi\in\zlz$, $\ez\in(0, 1]$ and $\bz$, $\gz\in (0, \ez)$.
For any $f\in (\cg^\ez_0(\bz, \gz))'$, the {\it radial maximal function},
the {\it nontangential maximal function}
and the {\it grand maximal function} are defined by
setting, for all $x\in(0, \fz)$,
\begin{equation*}
\cmr(f)(x)\equiv\sup_{t>0}\lf|\Phi_t(f)(x)\r|\equiv
\sup_{t>0}\lf|f\sharp_\lz\phi_t(x)\r|,
\end{equation*}
\begin{equation*}
\cmn(f)(x)\equiv\sup_{t>0,\,|x-y|<t}\lf|\Phi_t(f)(y)\r|\equiv\sup_{t>0,\,|x-y|<t}
\lf|f\sharp_\lz\phi_t(y)\r|
\end{equation*}
and
\begin{equation*}
\cmg(f)(x)\equiv\sup\{\langle f, \vz\rangle:\ \vz\in
\cg^\ez_0(\bz, \gz),\ \|\vz\|_{\cg(x,\,r,\,\bz,\,\gz)}\le1\ {\rm for\ some}\
r\in(0, \fz)\},
\end{equation*}
 where for all $t,\,y\in(0, \fz)$,
$\phi_t(y)\equiv t^{-2\lz-1}\phi(y/t)$.
\end{defn}

\begin{rem}\label{r1.3}\rm
Observe that the functions
\begin{equation*}
P^{[\lz]}(x)\equiv\frac{2\lz\bgz(\lz)}{\bgz(\lz+1/2)\sqrt{\pi}}
\frac1{(1+x^2)^{\lz+1}}
\end{equation*}
and $W^{[\lz]}(x)\equiv2^{(1-2\lz)/2}
\exp\lf(-x^2/2\r)/\bgz(\lz+1/2)$
both belong to $\zlz$. Recall that $\plz(f)\equiv e^{-t\sqrt\tbz}f=f\sharp_\lz
P^{[\lz]}_t$
and
$W^{[\lz]}_t(f)\equiv e^{-t\tbz}f=f\sharp_\lz W^{[\lz]}_{\sqrt{2t}}$
(see \cite[pp.\,200-201]{bdt}). Thus, the radial maximal functions
and the nontangential maximal functions respectively associated with
$\{e^{-t\sqrt{\tbz}}\}_{t>0}$ and $\{e^{-t\tbz}\}_{t>0}$ are
special cases of $\cmr$ and $\cmn$.
\end{rem}

\begin{defn}\label{d1.6}\rm
Let $\ez_1\in(0, 1]$, $\ez_2$, $\ez_3\in(0, \fz)$,
$a\in(0, \fz)$, $\ez\in (0, \min\{\ez_1, \ez_2\})$,
$\bz,\,\gz\in(0, \ez)$ and $\{S_k\}_{k\in\zz}$
be an $(\ez_1, \ez_2, \ez_3)-\mathop\mathrm{AOTI}$.
For all $f\in (\cg^\ez_0(\bz, \gz))'$, the {\it Littlewood-Paley $g$-function}
$g(f)$ and the {\it Lusin-area function} $S_a(f)$ are respectively defined by setting,
for all $x\in(0, \fz)$,
$$g(f)(x)\equiv\lf[\sum_{k\in\zz}|S_k(f)(x)-S_{k-1}(f)(x)|^2\r]^{1/2}$$
and
$$S_a(f)(x)\equiv\lf\{\sum_{k\in\zz}\dint_{|x-y|<a2^{-k}}
|S_k(f)(y)-S_{k-1}(f)(y)|^2\frac{\dmz(y)}{m_\lz(I(x, a2^{-k}))}\r\}^{1/2}.$$
\end{defn}

The first result of this paper is as follows.

\begin{thm}\label{t1.1}
Let $\lz\in (0, \fz)$, $p\in((2\lz+1)/(2\lz+2), 1]$,
$\ez_1\in(0, 1]$, $\ez_2$, $\ez_3\in(0, \fz)$,
$a\in(0, \fz)$, $\ez\in (0, \min\{\ez_1, \ez_2\})$,
$\bz,\,\gz\in ((2\lz+1)(1/p-1), \ez)$
and $\phi\in\zlz$.
A distribution $f\in (\cg^\ez_0(\bz, \gz))'$ belongs to $\hpz$ if and only if
$\cm(f)\in\lpz$; moreover,
$$\|f\|_\hpz\sim\|\cm(f)\|_\lpz,$$
where $\cm(f)$ is one of $\cmr(f)$, $\cmn(f)$, $\cmg(f)$, $g(f)$ and $S_a(f)$.
\end{thm}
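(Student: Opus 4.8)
The plan is to reduce Theorem~\ref{t1.1} to the abstract theory of Hardy spaces on RD-spaces developed in \cite{hmy06,hmy}, using that $((0,\fz),|\cdot|,dm_\lz)$ is an RD-space with ``upper dimension'' $n\equiv2\lz+1$ (by \eqref{1.3}--\eqref{1.5}), so that $p\in((2\lz+1)/(2\lz+2),1]$ is exactly $(n/(n+1),1]$ and $\bz,\gz\in((2\lz+1)(1/p-1),\ez)$ is the usual condition $\bz,\gz\in(n(1/p-1),\ez)$. First I would exhibit one \emph{concrete} continuous $(\ez_1,\ez_2,\ez_3)-\ati$ attached to $\tbz$: take $\{S_k\}_{k\in\zz}$ to be a dyadic sampling of the Bessel heat semigroup $\{e^{-t\tbz}\}_{t>0}$, i.e.\ of $f\mapsto f\sharp_\lz W^{[\lz]}_{\sqrt{2t}}$ with $W^{[\lz]}\in\zlz$ (cf.\ Remark~\ref{r1.1}). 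The size, regularity and second-difference conditions (i)--(iv) of Definition~\ref{d1.1} then follow by feeding the Hankel-translation formula \eqref{1.2} (and its $x$-derivatives) with the pointwise bounds \eqref{1.6}--\eqref{1.8}, while (v) reflects the conservation property of the heat semigroup. Granting this, the maximal-function and Littlewood--Paley characterizations of $\hpz$ in \cite{hmy06,hmy} give at once
\begin{equation*}
\|f\|_\hpz\sim\|\cmg(f)\|_\lpz\sim\|g(f)\|_\lpz\sim\|S_a(f)\|_\lpz ,
\end{equation*}
which settles three of the five functions.

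Next I would splice in $\cmr$ and $\cmn$, built from an arbitrary $\phi\in\zlz$. The easy half is pointwise: $\cmr(f)\le\cmn(f)$ trivially, while for $\cmn(f)(x)\lesssim\cmg(f)(x)$ one writes $\Phi_t(f)(y)=\langle f,\tau^{[\lz]}_y\phi_t\rangle$ and invokes what is really the technical core of the paper --- the assertion that the Hankel translate $\tau^{[\lz]}_z\phi_t$ of $\phi_t$ lies in $\cg(z,t,\bz,\gz)$ with norm bounded uniformly in $z$ and $t$ by a constant depending only on $\phi$, i.e.\ that it satisfies the size condition ${\rm (G)_i}$ and the H\"older condition ${\rm (G)_{ii}}$ of Definition~\ref{d1.2}. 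Since $|x-y|<t$ forces $\cg(y,t,\bz,\gz)$ and $\cg(x,t,\bz,\gz)$ to coincide with equivalent norms (by \eqref{1.3}--\eqref{1.5}), the definition of $\cmg(f)(x)$ then yields $|\Phi_t(f)(y)|\lesssim\cmg(f)(x)$, hence $\cmn(f)(x)\lesssim\cmg(f)(x)$. Proving the core assertion requires splitting the integral in \eqref{1.2} according to whether the points involved are near or far from the origin (the two regimes behaving genuinely differently because of the weight $x^{2\lz}$), and I expect it to be the most computation-heavy estimate of the paper. Together with the previous paragraph this gives $\|\cmr(f)\|_\lpz\le\|\cmn(f)\|_\lpz\lesssim\|\cmg(f)\|_\lpz\sim\|f\|_\hpz$.

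It then remains to prove the reverse estimate $\|f\|_\hpz\lesssim\|\cmr(f)\|_\lpz$, the classical ``hard direction'' of recovering the atomic (equivalently, the grand-maximal) norm from the radial maximal function, and this is the conceptual crux. Here I would follow the RD-space version of the Fefferman--Stein/Uchiyama argument. Introduce, for $N$ large, the tangential maximal function $\Phi^{\ast\ast}_N(f)(x)\equiv\sup_{t>0,\,y}|\Phi_t(f)(y)|\,(1+|x-y|/t)^{-N}$, and establish two things: (a) $\cmg(f)\lesssim\Phi^{\ast\ast}_N(f)$ pointwise, by inserting the discrete Calder\'on reproducing formula of \cite{hmy}, writing $f=\sum_k R_kQ_k(f)$ with $R_k$, $Q_k$ built from $\{S_k\}$ and relating $Q_k(f)$ to $\Phi_{2^{-k}}(f)$ through the fact that $\phi$ and the $\ati$ kernels obey bounds of the same type as \eqref{1.6}--\eqref{1.8}; and (b) $[\Phi^{\ast\ast}_N(f)(x)]^r\lesssim M\big((\cmr(f))^r\big)(x)$ for a suitable $r\in(0,p)$ and $N$ large depending on $r$, valid a priori when $\Phi^{\ast\ast}_N(f)\in\lpz$ and extended to general $f$ by the standard limiting argument, where $M$ is the Hardy--Littlewood maximal operator on $((0,\fz),|\cdot|,dm_\lz)$. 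Since $p/r>1$, the maximal theorem then gives $\|\cmg(f)\|_\lpz\lesssim\|\Phi^{\ast\ast}_N(f)\|_\lpz\lesssim\|\cmr(f)\|_\lpz$, and combining all the displayed comparisons proves $\|f\|_\hpz\sim\|\cm(f)\|_\lpz$ for $\cm\in\{\cmr,\cmn,\cmg,g,S_a\}$. The two genuine obstacles are thus the Bessel-specific estimates underlying both the $\ati$ verification and the ${\rm (G)_i}$--${\rm (G)_{ii}}$ assertion, and, in step (b), the reduction to the a priori finiteness hypothesis --- both of which must be carried through with $dm_\lz$ in place of Lebesgue measure.
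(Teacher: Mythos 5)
Your plan is correct in substance, but it is organized differently from the paper, and the difference matters for how much work you end up doing. The paper's single Bessel-specific ingredient is Lemma \ref{l2.1}: for an \emph{arbitrary} $\phi\in\zlz$, the family $\{\Phi_t/\|\phi\|_\loz\}_{t>0}$, with kernel $\Phi_t(x,y)=\tlz\phi_t(y)$ computed from \eqref{1.2}, is shown to be a continuous $(1,1,1)$-$\ati$ --- not only the size and first-difference bounds (your ${\rm (G)_i}$--${\rm (G)_{ii}}$ assertion, which is the same computation split into the cases $t\ge x$ or $|x-y|\ge x/2$ versus $t<x$, $|x-y|<x/2$), but also the conservation property \eqref{2.2} and the second-difference condition (iv). Once that is done, Theorem \ref{t1.1} is a pure citation: the radial maximal characterization for an $\ati$ from \cite[Theorem 3.1]{yz} (or \cite[Corollary 1.8]{gly2}), the nontangential and grand maximal characterizations from \cite[Corollary 4.18]{gly1}, and the $g$-function and Lusin-area characterizations from \cite[Theorems 5.13 and 5.16]{hmy}. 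You instead verify the $\ati$ property only for the heat kernel and then propose to re-derive the hard direction $\|f\|_\hpz\ls\|\cmr(f)\|_\lpz$ for a general $\phi\in\zlz$ by hand, via the tangential maximal function, the discrete Calder\'on reproducing formula, and the bound by $M\big((\cmr f)^r\big)$; this is exactly the content of \cite{yz,gly2} in the RD-space setting, so you are duplicating an available abstract theorem rather than exploiting it, and the ``standard limiting argument'' and the splicing of $Q_k(f)$ against $\Phi_{2^{-k}}(f)$ hide estimates comparable in weight to the ones you already need for your core assertion. What your route buys is self-containedness in the Fefferman--Stein direction; what the paper's route buys is that, at the cost of also proving the double-difference estimate \eqref{2.12} for general $\phi$, the whole of Theorem \ref{t1.1} collapses to Lemma \ref{l2.1} plus citations, with no maximal-function machinery redone.
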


Differently from \cite{bdt}, we establish Theorem \ref{t1.1}
by first showing that $\{\Phi_t\}_{t>0}$ defined as in Definition
\ref{d1.5} is actually a constant multiple of
an approximation of the identity
as in Definition \ref{d1.1}; see Lemma \ref{l2.1} below.
We then obtain all desired conclusions
of Theorem \ref{t1.1} by directly applying
results in \cite{hmy,gly1,yz}. The details
are given in Section \ref{s2}.

By applying Theorem \ref{t1.1}, we next establish the
characterization of $\hpz$ in terms of
the Riesz transform $\riz$. Let $r\in[1, \fz)$ and $f\in\lrz$.
The $\Delta_\lz$-conjugate of $f$ is defined by setting,
for any $t,\,x\in(0, \fz)$,
\begin{equation*}
\qlz(f)(x)\equiv\int_0^\fz\qlz(x, y)f(y)\, dm_\lz(y),
\end{equation*}
where for any $t,\,x,\,y\in(0, \fz)$,
\begin{equation}\label{1.9}
\begin{array}[b]{cl}
\qlz(x, y)&\equiv-(xy)^{-\lz+1/2}\dint_0^\fz e^{-t\xi}\xi J_{\lz+1/2}(x\xi)
J_{\lz-1/2}(\xi y)\,d\xi\\
&=-\dfrac{2\lz}{\pi}\dint_0^\pi\dfrac{(x-y\cos\theta)(\sin\theta)^{2\lz-1}}
{(x^2+y^2+t^2-2xy\cos\theta)^{\lz+1}}\,d\theta;
\end{array}
\end{equation}
see \cite[p.\,84]{ms}. Moreover,
there exists the boundary value function $\lim_{t\to0}\qlz(f)(x)$
for almost every $x\in (0, \fz)$ (see \cite[p.\,84]{ms}),
which is defined to be the {\it Riesz transform $\riz(f)$}.
Muckenhoupt and Stein \cite[p.\,87]{ms} also proved that $\riz$ is bounded
on $\lrz$ when $r\in(1, \fz)$. In \cite[pp.\,710-711]{bfbmt},
Betancor et al. further showed that if
$r\in[1, \fz)$ and $f\in\lrz$, then for almost every $x\in (0, \fz)$,
\begin{equation*}
\riz(f)(x)=\dlim_{\dz\to0}\dint_{0,\,|x-y|>\dz}^\fz
Q^{[\lz]}_0(x, y)f(y)\,dm_\lz(y),
\end{equation*}
where for any $x,\,y\in(0, \fz)$,
\begin{equation*}
Q_0^{[\lz]}(x, y)\equiv-\dfrac{2\lz}{\pi}\dint_0^\pi
\frac{(x-y\cos\theta)(\sin\theta)^{2\lz-1}}
{(x^2+y^2-2xy\cos\theta)^{\lz+1}}\,d\theta.
\end{equation*}
Moreover, Betancor, Fari\~{n}a  and Sanabria \cite{bfs} showed
that $\riz$ is a Calder\'on-Zygmund operator on the space
$((0, \fz), \rho, dm_\lz)$ of homogeneous type,
where $\riz$ is bounded from $\hoz$ to $\loz$.

Let $\ez\in (0, 1]$, $\bz,\,\gz,\,\wz\ez\in(0, \ez)$  and
$f\in (\cg^{\wz\ez}_0(\bz, \gz))'$.  $\riz(f)$ is said to
belong to $(\ocg(\bz, \gz))'$,
if there exists $F\in (\ocg^{\wz\ez}_0(\bz, \gz))'$ such that for
all $\psi\in \ocg^{\wz\ez}_0(\bz, \gz)$,
\begin{equation*}
\langle \riz (f), \psi\rangle\equiv
\dint_0^\fz f(x)\triz(\psi)(x)\,dm_\lz(x)
=\dint_0^\fz F(x)\psi(x)\,dm_\lz(x),
\end{equation*}
where $\triz$ is the adjoint operator of $\riz$; see also
\cite[Lemma 2.42]{bdt}. By Lemma \ref{l3.1} below,
we see that $\triz$ is bounded from $\ocg^{\wz\ez}_0(\bz, \gz)$
to $\cg^{\wz\ez}_0(\bz, \gz)$.

A distribution $f\in (\cg^{\wz \ez}_0(\bz, \gz))'$ is said to be {\it restricted at
infinity}, if for any
$\phi\in\zlz$ and $r>0$ large enough, $f\sharp_\lz\phi\in \lrz$.
By Theorem \ref{t1.1} and an
argument as in \cite[pp.\,100-101]{s93}, we see that for any
$f\in \hpz$ with $p\in((2\lz+1)/(2\lz+2), 1]$ and
$\phi\in\zlz$, $f\sharp_\lz\phi\in \lrz$ for all $r\in[p, \fz]$.
Moreover, we have the following characterization of $\hpz$
in terms of the Riesz transform $\riz$.

\begin{thm}\label{t1.2}
Let $\lz\in (0, \fz)$, $p\in ((2\lz+1)/(2\lz+2), 1]$, $\ez\in (0, 1]$,
$\bz,\,\gz\in((2\lz+1)(1/p-1), \ez)$,
$\wz\ez\in(0, \ez)$, $\phi\in\zlz$ and $f\in (\cg^{\wz\ez}_0(\bz, \gz))'$ be
restricted at infinity. Then $f\in \hpz$ if and only if
there exists a positive constant $C$ such that for all $\dz\in(0, \fz)$,
\begin{equation}\label{1.10}
\lf\|f\slz \phi_\dz\r\|_\lpz+\lf\|\riz\lf(f\slz \phi_\dz\r)\r\|_\lpz\le C.
\end{equation}
\end{thm}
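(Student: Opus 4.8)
The plan is to prove both implications by reducing to Theorem~\ref{t1.1}, using the fact that $\riz$ is a Calder\'on--Zygmund operator on the space of homogeneous type $((0,\fz),\rho,dm_\lz)$ together with the known $\hoz\to\loz$ and $\lrz\to\lrz$ ($r\in(1,\fz)$) boundedness of $\riz$ recalled above, and the corresponding facts for its adjoint $\triz$.

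For the necessity, suppose $f\in\hpz$. Since $\phi\in\zlz$, the discussion preceding the theorem (via Theorem~\ref{t1.1} and the argument of \cite[pp.\,100--101]{s93}) gives $f\slz\phi_\dz\in\lrz$ for all $r\in[p,\fz]$, and in fact one expects the stronger quantitative bound $\|f\slz\phi_\dz\|_\hpz\ls\|f\|_\hpz$ uniformly in $\dz$, because $\{\Phi_t\}_{t>0}$ is (a constant multiple of) an approximation of the identity by Lemma~\ref{l2.1}, so convolution with $\phi_\dz$ acts boundedly on $\hpz$ with norm independent of $\dz$ — this is where the uniform constant $C$ in \eqref{1.10} comes from. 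Then $\|f\slz\phi_\dz\|_\lpz\ls\|f\slz\phi_\dz\|_\hpz\ls\|f\|_\hpz$ by Theorem~\ref{t1.1} (the atomic-maximal equivalence forces $\hpz\subset\lpz$ in the usual way, or one invokes it directly). For the Riesz-transform term, since $\riz$ is a Calder\'on--Zygmund operator on a space of homogeneous type and is bounded from $\hoz$ to $\loz$, the standard molecular/atomic theory on RD-spaces (for which $p>(2\lz+1)/(2\lz+2)$ is exactly the admissible range, since the ``dimension'' here is $2\lz+1$) yields $\|\riz(g)\|_\lpz\ls\|g\|_\hpz$ for any $g\in\hpz$; applying this with $g=f\slz\phi_\dz$ and the uniform bound just obtained gives the second term of \eqref{1.10}, uniformly in $\dz$.

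For the sufficiency, assume \eqref{1.10} holds for all $\dz>0$. Fix a sequence $\dz_j\downarrow 0$. The uniform $\lpz$-bound on $f\slz\phi_{\dz_j}$ and on $\riz(f\slz\phi_{\dz_j})$, combined with the fact that $p\le 1$, is not by itself enough to pass to a limit; instead I would argue as in the classical proof (\cite{fs72}, \cite{s93}) that \eqref{1.10} implies a uniform $\hpz$-bound on the smoothed pieces $f\slz\phi_\dz$. Concretely: for each fixed $\dz$, $g_\dz\equiv f\slz\phi_\dz$ is a genuine function lying in $\lrz$ for $r$ large (restrictedness at infinity), one has $g_\dz$ and its Riesz transform $\riz(g_\dz)$ both in $\lpz$ with uniformly bounded norm, and $g_\dz$ is (up to constants) itself the convolution of a distribution with an element of $\zlz$, hence smooth. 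For such a function the ``conjugate-harmonic'' characterization of $\hpz$ — that membership in $\hpz$ is equivalent to $g_\dz,\riz(g_\dz)\in\lpz$ with comparable norms — should hold by a subharmonicity argument on the upper half-space $(0,\fz)\times(0,\fz)$ for the pair $(\plz(g_\dz),\qlz(g_\dz))$, exploiting \eqref{1.9} and the Cauchy--Riemann-type relations in the Bessel setting from \cite{ms}; this gives $\|g_\dz\|_\hpz\ls C$ uniformly. Finally, since $g_\dz=f\slz\phi_\dz\to f$ in $(\cg^{\wz\ez}_0(\bz,\gz))'$ as $\dz\to 0$ (a property of continuous approximations of the identity, cf. Remark~\ref{r1.1} and Lemma~\ref{l2.1}), and $\hpz$ is a dual-type space with the Fatou-type property that a weak-$\ast$ limit of a bounded sequence stays bounded in $\hpz$-norm, we conclude $f\in\hpz$ with $\|f\|_\hpz\ls C$.

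The main obstacle I anticipate is the sufficiency direction, specifically establishing the uniform $\hpz$-bound on $f\slz\phi_\dz$ from the pair of $\lpz$-bounds in \eqref{1.10}. In the Euclidean case this is the heart of the Fefferman--Stein conjugate-function characterization and relies on delicate subharmonicity estimates for $|F|^p$ when $F=(u,v)$ is a Cauchy--Riemann pair and $p$ is close to but above the critical exponent; here one must run the analogous argument for the Bessel conjugate pair $(\plz,\qlz)$, controlling the extra $x^{2\lz}$ weight and checking that the subharmonicity exponent threshold is precisely $(2\lz+1)/(2\lz+2)$, matching the hypothesis on $p$. I would isolate this as the key lemma (paralleling \cite[Lemma 2.42]{bdt} and the computations on \cite[pp.\,84--87]{ms}), prove the pointwise differential inequality for the conjugate pair, and then feed it into the standard maximal-function machinery; the passage to the limit and the necessity direction are then comparatively routine given Theorem~\ref{t1.1} and the Calder\'on--Zygmund theory on RD-spaces.
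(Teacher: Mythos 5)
Your plan follows the same two main ideas as the paper's proof, so in outline it is correct; the differences are in how the limiting argument is organized and in two points you treat as routine. For the necessity, the paper also combines a uniform $\hpz$-bound for the smoothing operators $\Phi_\dz$ with the $\hpz\to\lpz$ boundedness of $\riz$ (Lemma \ref{l3.2}), but neither of these is quoted as ``standard'': both are proved by checking that $\riz(a)$ and $\Phi_\dz(a)$ are, uniformly in $\dz$ and in the atom $a$, constant multiples of $(p,2,\eta)$-molecules with respect to the measure distance $d_\lz$, and then invoking the molecular theory of \cite{hyz} together with the criterion of \cite{yz08}. In particular, the uniform $\hpz$-boundedness of $\Phi_\dz$ is not an automatic consequence of Lemma \ref{l2.1}; it is exactly where the estimates of type \eqref{3.5}--\eqref{3.6} are redone for $\Phi_\dz(a)$.

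For the sufficiency, both you and the paper run the Muckenhoupt--Stein conjugate-harmonic machinery for the pair $(\plz(f\slz\phi_\dz),\qlz(f\slz\phi_\dz))$, but the order of operations differs: you propose to first get a uniform bound $\|f\slz\phi_\dz\|_\hpz\ls C$ and then pass to the limit via a Fatou-type property of $\hpz$, whereas the paper first majorizes $[F_\dz(t,\cdot)]^p$ by $\plz([F_\dz(0,\cdot)]^p)$ (see \eqref{3.19}), lets $\dz\to0$ pointwise in $F_\dz$ (an explicit Hankel-convolution computation showing $f\slz\phi_\dz\to\|\phi\|_\loz f$ distributionally, plus Fatou's lemma), and only then applies the key Lemma \ref{l3.4} to $f$ itself, concluding with the radial maximal characterization of Theorem \ref{t1.1}. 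Your variant can be made to work, but the Fatou property you invoke should not be justified by calling $\hpz$ ``a dual-type space'' ($\hpz$ with $p\le1$ is not a dual space in any sense usable here); it must be derived from the grand or radial maximal characterization in Theorem \ref{t1.1} together with Fatou's lemma, and the distributional convergence $f\slz\phi_\dz\to\|\phi\|_\loz f$ still has to be verified in either approach. Finally, one inaccuracy in your assessment of the ``main obstacle'': the subharmonicity threshold for $[F]^p$ in the Bessel setting is $2\lz/(2\lz+1)$ (Lemma 5 of \cite{ms}, packaged in Lemmas \ref{l3.3} and \ref{l3.4}, the latter being a variant of \cite[Lemma 2.38]{bdt}), not $(2\lz+1)/(2\lz+2)$; the restriction $p>(2\lz+1)/(2\lz+2)$ enters through the atomic/molecular theory on the RD-space of ``dimension'' $2\lz+1$, so the differential inequality you planned to prove from scratch is already available and holds in a wider range than you need.
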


To show the necessity of Theorem \ref{t1.2}, by using
the molecular characterization of the atomic Hardy space
$\hpz$ (see Theorems 2.1 and 2.2 in \cite{hyz}) and
the boundedness criterion on sublinear operators on
Hardy spaces over RD-spaces (see \cite{yz08} or \cite{gly1}),
we first show that the Riesz transform $\riz$
is bounded from $\hpz$ to $\lpz$ (see Lemma \ref{l3.2}
below) and  $f\slz \phi_\dz$ is
bounded on $\hpz$ uniformly on $\dz\in (0,\fz)$, which
further induces the necessity of Theorem \ref{t1.2}.

To show the sufficiency of Theorem \ref{t1.2}, we establish a key estimate
for the radial maximal function of the first
entry of the conjugate harmonic systems satisfying
the generalized Cauchy-Riemann equations
associated with $\Delta_\lz$ in terms of maximal
$L^p$ norm of the conjugate harmonic systems (see Lemma \ref{l3.4}
below), which when $p=1$ was already obtained
by Betancor et al. in \cite[Lemma 2.38]{bdt}.
Then, as an application of Theorem \ref{t1.1},
we obtain the sufficiency of Theorem \ref{t1.2}.
The details are given in Section \ref{s3}.

We remark that \eqref{1.10} is formally slightly different from the case
for $H^p(\rn)$ (see \cite[p.\,123]{s93}).
Recall that a tempered
distribution $f\in{\mathcal S}'(\rn)$ restricted at infinity
belongs to $H^p(\rn)$ with $p\in ((n-1)/n, 1]$
if and only if there exists a positive constant $C$
such that for all $\dz\in(0, \fz)$,
\begin{equation}\label{1.11}
\lf\|f\ast\phi_\dz\r\|_\lp+\sum_{j=1}^n\lf\|R_j(f)\ast\phi_\dz\r\|_\lp\le C,
\end{equation}
where $\phi\in{\mathcal S}(\rn)$,
$\phi_\dz(x)\equiv \dz^{-n}\phi(x/\dz)$, and $\{R_j\}_{j=1}^n$ are
the classical Riesz transforms; see \cite[p.\,123]{s93}. Since
$\{R_j\}_{j=1}^n$ are convolution operators, we have that
for all $j\in\{1,\,\cdots, n\}$,
$$R_j(f)\ast \phi_\dz=(K_j\ast f)\ast\phi_\dz=K_j\ast (f\ast\phi_\dz)
= R_j(f\ast\phi_\dz),$$
where $K_j$ is the kernel of $R_j$. Thus, for $H^p(\rn)$
with $p\in ((n-1)/n, 1]$, \eqref{1.11} and \eqref{1.10}
are the same, and actually these commutative relations
were used in the proof of \cite[p.\,123, Proposition 3]{s93}.
However, it is unclear if this is also true
for the Hardy space $\hpz$. Nevertheless,
from Theorems \ref{t1.1} and \ref{t1.2}
together with the boundedness of $\riz$ on $\hpz$ (see Lemma \ref{l3.2} below),
we immediately deduce the following result.
The details are omitted.

\begin{cor}\label{c1.1}
Let $\lz\in (0, \fz)$, $p\in ((2\lz+1)/(2\lz+2), 1]$, $\ez\in (0, 1]$,
$\bz,\,\gz\in((2\lz+1)(1/p-1), \ez)$,
$\wz\ez\in(0, \ez)$ and $f\in (\cg^{\wz\ez}_0(\bz, \gz))'$ be
restricted at infinity. Then $f\in \hpz$ if and only if
there exists a positive constant $\wz C$ such that for all $\dz\in(0, \fz)$,
\begin{equation*}
\lf\|f\slz \phi_\dz\r\|_\lpz+\lf\|\riz\lf(f\slz \phi_\dz\r)\r\|_\lpz
+\lf\|\lf(\riz(f)\r)\slz \phi_\dz\r\|_\lpz\le \wz C.
\end{equation*}
\end{cor}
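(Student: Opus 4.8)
The plan is to deduce Corollary \ref{c1.1} entirely from the two preceding theorems together with the $\hpz$-boundedness of $\riz$ (Lemma \ref{l3.2}), without reworking any of the harmonic-analytic estimates. Fix $f\in (\cg^{\wz\ez}_0(\bz, \gz))'$ restricted at infinity. For the \emph{necessity} direction, suppose $f\in \hpz$. Then Theorem \ref{t1.2} already provides a constant $C$ with $\|f\slz\phi_\dz\|_\lpz+\|\riz(f\slz\phi_\dz)\|_\lpz\le C$ for all $\dz>0$, so it remains only to bound the third term $\|(\riz(f))\slz\phi_\dz\|_\lpz$ uniformly in $\dz$. Here I would argue that $\riz(f)\in\hpz$ with $\|\riz(f)\|_\hpz\ls\|f\|_\hpz$ by Lemma \ref{l3.2}, and then invoke the radial-maximal-function characterization from Theorem \ref{t1.1}: since $\phi\in\zlz$, for any $g\in\hpz$ one has $\sup_{\dz>0}|g\slz\phi_\dz|\le\cmr(g)$ pointwise, hence $\|g\slz\phi_\dz\|_\lpz\le\|\cmr(g)\|_\lpz\sim\|g\|_\hpz$ uniformly in $\dz$. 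Applying this with $g=\riz(f)$ yields $\|(\riz(f))\slz\phi_\dz\|_\lpz\ls\|\riz(f)\|_\hpz\ls\|f\|_\hpz<\fz$, which combined with Theorem \ref{t1.2} gives the claimed bound with some $\wz C$.

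For the \emph{sufficiency} direction, assume the displayed inequality holds for all $\dz>0$ with constant $\wz C$. Discarding the nonnegative third term, we retain $\|f\slz\phi_\dz\|_\lpz+\|\riz(f\slz\phi_\dz)\|_\lpz\le\wz C$ for all $\dz>0$, which is precisely hypothesis \eqref{1.10} of Theorem \ref{t1.2}. Since $f$ is restricted at infinity by assumption, Theorem \ref{t1.2} applies directly and yields $f\in\hpz$. Thus the extra third term is genuinely redundant for the forward implication; it is present in the statement only so that the characterizing condition becomes manifestly symmetric in $f$ and $\riz(f)$, matching the commutative structure available in the classical $\hp$ setting via \eqref{1.11}.

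I do not anticipate a serious obstacle, since every ingredient is quoted: the equivalence $\|g\|_\hpz\sim\|\cmr(g)\|_\lpz$ from Theorem \ref{t1.1}, the boundedness $\|\riz(g)\|_\hpz\ls\|g\|_\hpz$ from Lemma \ref{l3.2}, and the full biconditional of Theorem \ref{t1.2}. The only point that warrants a line of care is the pointwise domination $\sup_{\dz>0}|g\slz\phi_\dz(x)|\le\cmr(g)(x)$: this is immediate from the very definition of $\cmr$ in Definition \ref{d1.5} once one notes $\Phi_\dz(g)=g\slz\phi_\dz$, and it requires knowing that the pairing $\langle g,\cdot\rangle$ defining $g\slz\phi_\dz$ is well defined on the relevant test-function class, which holds because $\hpz\subset(\cg^\ez_0(\bz,\gz))'$ and $\phi_\dz(y/\cdot)$ lies in the appropriate space (see Remark \ref{r1.2}(i)). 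Consequently the proof is a short assembly of the stated results, which is why the authors remark that ``the details are omitted.''
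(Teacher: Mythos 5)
Your proposal is correct and is exactly the assembly the paper intends: Theorem \ref{t1.2} handles the first two terms and the converse, while Lemma \ref{l3.2} ($\riz(f)\in\hpz$) combined with the radial maximal function characterization in Theorem \ref{t1.1} gives the uniform bound $\lf\|(\riz(f))\slz\phi_\dz\r\|_\lpz\ls\|\riz(f)\|_\hpz\ls\|f\|_\hpz$, which is precisely why the authors state the corollary follows from Theorems \ref{t1.1} and \ref{t1.2} and Lemma \ref{l3.2} with details omitted.
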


Finally, we make some conventions on notation. Throughout the paper,
we denote by $C$ and $\wz C$ {\it positive constants} which
are independent of the main parameters, but they may vary from line to
line. If $f\le Cg$, we then write $f\ls g$ or $g\gs f$;
and if $f \ls g\ls f$, we then write $f\sim g.$
For any $k\in (0, \fz)$ and $I\equiv I(x, r)$ for some $x$, $r\in (0, \fz)$,
$kI\equiv(x-kr, x+kr)\cap(0, \fz)$.

\section{\hspace{-0.6cm}.\hspace{0.4cm}Proof of Theorem \ref{t1.1}}\label{s2}

\noindent This section is devoted to the proof of Theorem \ref{t1.1}.
We start with the following key lemma.

\begin{lem}\label{l2.1}
Let $\phi\in \zlz$ and $\{\Phi_t\}_{t>0}$ be as in Definition \ref{d1.5}.
Then
$$\lf\{\frac 1{\|\phi\|_\loz}\Phi_t\r\}_{t>0}$$
is a continuous $(1, 1, 1)-\mathop\mathrm{AOTI}$.
\end{lem}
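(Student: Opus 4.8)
The plan is to verify that the normalized family $\{\frac{1}{\|\phi\|_\loz}\Phi_t\}_{t>0}$ satisfies conditions (i) through (v) of Definition \ref{d1.1} (with $2^{-k}$ replaced by $t$, as in Remark \ref{r1.1}), where $\Phi_t(x,y) = \tlz\phi_t(y)$ is the integral kernel of the Hankel convolution operator $f\mapsto f\slz\phi_t$. Since $\phi\in\zlz$, the decay and smoothness estimates \eqref{1.6}, \eqref{1.7} and \eqref{1.8} on $\phi$, $\phi'$ and $\phi''$ are the raw material; everything reduces to estimating the Hankel translation integral \eqref{1.2}, namely
\begin{equation*}
\tlz\phi_t(y)=\frac{\bgz(\lz+1/2)}{\bgz(\lz)\sqrt\pi}\dint_0^\pi \phi_t\lf(\sqrt{x^2+y^2-2xy\cos\tz}\r)(\sin\tz)^{2\lz-1}\,d\tz,
\end{equation*}
together with its first- and second-order differences in the $x$ variable (and, by symmetry of $\tlz$, in the $y$ variable).

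First I would record the elementary geometric facts: for $\tz\in(0,\pi)$ one has $\sqrt{x^2+y^2-2xy\cos\tz}\sim |x-y| + (xy)^{1/2}(1-\cos\tz)^{1/2}$, that this quantity is comparable to $|x-y|$ when $\tz$ is small and grows like $(xy)^{1/2}$ near $\tz=\pi$, and that $m_\lz(I(x,t))+m_\lz(I(y,t))+m_\lz(I(x,|x-y|))\sim (t+|x-y|)(x+t+|x-y|)^{2\lz}$ by \eqref{1.3}--\eqref{1.4}. Using \eqref{1.6} in the form $\phi_t(z)=t^{-2\lz-1}\phi(z/t)\ls t^{-2\lz-1}(1+z^2/t^2)^{-\lz-1} = t^{2}(t^2+z^2)^{-\lz-1}$ and splitting the $\tz$-integral according to whether $|x-y|\ge (xy)^{1/2}$ or not, one obtains the size bound (i) with $\ez_2=1$; this is essentially the computation already implicit in \cite{bdt} for the Poisson and heat kernels, now done for a general $\phi\in\zlz$. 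For the smoothness estimates (ii)--(iv), I would differentiate under the integral sign in $\tz$: since $\frac{\pa}{\pa x}\sqrt{x^2+y^2-2xy\cos\tz}$ is bounded by $1$ in absolute value, one gets $|\frac{\pa}{\pa x}\tlz\phi_t(y)|\ls \int_0^\pi |\phi_t'|(\sqrt{\cdots})(\sin\tz)^{2\lz-1}\,d\tz$, and \eqref{1.7} gives $|\phi_t'(z)|=t^{-2\lz-2}|\phi'(z/t)|\ls t^{-2\lz-2}(z/t)(1+z^2/t^2)^{-\lz-2}$, which carries the extra factor $(t+|x-y|)^{-1}$ needed for $\ez_1=1$; the mean value theorem then upgrades this to the difference bound (ii) under the hypothesis $|x-\wz x|\le (t+|x-y|)/2$. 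Condition (iii) is identical by the symmetry of $\tlz$ in $x$ and $y$, and condition (iv) — the mixed second difference — follows the same way using \eqref{1.8} to control $\frac{\pa^2}{\pa x\pa y}\tlz\phi_t(y)$, with $\ez_3=1$. Finally, condition (v), the conservation property $\int_0^\fz \tlz\phi_t(y)\,\dmz(y)=\|\phi\|_\loz=\int_0^\fz\tlz\phi_t(x)\,\dmz(x)$, follows from the fact that $\tlz$ preserves the integral against $\dmz$ (a standard property of Hankel translation, e.g. from \cite{bdt}) together with the scaling $\int_0^\fz\phi_t(y)\,\dmz(y)=\int_0^\fz\phi(y)\,\dmz(y)=\|\phi\|_\loz$ since $\phi\ge0$; dividing by $\|\phi\|_\loz$ yields the required normalization to $1$.

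The main obstacle is the careful bookkeeping in condition (iv) and, more subtly, making sure the difference estimates (ii)--(iv) genuinely produce the \emph{sharp} denominator $m_\lz(I(x,t))+m_\lz(I(y,t))+m_\lz(I(x,|x-y|))$ rather than a larger quantity: near the endpoint $\tz=\pi$ the argument $\sqrt{x^2+y^2-2xy\cos\tz}$ behaves like $x+y$, so one must check that this regime contributes an acceptable amount and does not spoil the $(2^{-k}+|x-y|)^{-\ez_i}$ factors. I would handle this by the standard dichotomy $|x-y|\ge x/2$ versus $|x-y|< x/2$ (equivalently, whether the points are "close to the origin at scale $|x-y|$" or not), in each case comparing the measure expression with the appropriate power of $x+t+|x-y|$ via \eqref{1.3}, and in the first case exploiting that $x\sim y\sim |x-y|$ makes the Hankel translation integral essentially a one-dimensional convolution. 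Once the two regimes are separated, each of (i)--(iv) is a routine, if somewhat lengthy, estimation using only \eqref{1.6}--\eqful{1.8}; I would present the size estimate (i) in full and then indicate that (ii), (iii), (iv) follow by the same splitting with $\phi$ replaced by $\phi'$, $\phi'$, $\phi''$ respectively and an application of the mean value theorem, since no new idea is involved.
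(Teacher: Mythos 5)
Your proposal follows essentially the same route as the paper: verify conditions (i)--(v) of Definition \ref{d1.1} directly from the Hankel-translation integral \eqref{1.2}, using the bounds \eqref{1.6}--\eqref{1.8} and the mean value theorem, with the dichotomy $t\ge x$ or $|x-y|\ge x/2$ versus the complementary case, the splitting of the $\theta$-integral at $\pi/2$ (where $1-\cos\theta\gs\theta^2$), and condition (v) obtained from the integral-preserving property of the Hankel translation. Two small points to correct when writing it out: the size bound should read $\phi_t(z)\ls t\,(t^2+z^2)^{-\lz-1}$ (not $t^2$), and the mixed second difference (iv) requires \eqref{1.7} in addition to \eqref{1.8}, since differentiating $\phi_t\bigl(\sqrt{x^2+y^2-2xy\cos\theta}\bigr)$ twice produces a $\phi'$ term (with the second derivative of the distance) as well as a $\phi''$ term, exactly as in the paper's estimate of the quantity ${\rm G}$.
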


\begin{proof}\rm
By Definition \ref{d1.5} and \eqref{1.1}, we see that for all
$t,\,x,\,y\in(0, \fz)$, the kernel
$\Phi_t(x, y)\equiv \tlz\phi_t(y)$.
For all $x,\,y,\,z\in(0, \fz)$, let
$\triangle(x, y, z)$ be the area of a triangle with sides $x$, $y$,
$z$ when such a triangle exists, and
\begin{equation*}
D(x, y, z)\equiv\frac{2^{2\lz-2}\bgz(\lz+\frac12)}
{\bgz(\lz)\sqrt\pi}(xyz)^{-2\lz+1}[\triangle(x, y, z)]^{2\lz-2}
\end{equation*}
when such $\triangle(x, y, z)$ exists, and zero otherwise.
Then by \eqref{1.2} and the change of variables, we obtain that
\begin{equation}\label{2.1}
\begin{array}[b]{cl}
\Phi_t(x, y)&=\dfrac{\bgz(\lz+1/2)}{\bgz(\lz)\sqrt{\pi}}
\dint_0^\pi \phi_t\lf(\sqrt{x^2+y^2-2xy\cos \theta}\r)
(\sin\theta)^{2\lz-1}\,d\theta\\
&=\dfrac{\bgz(\lz+1/2)}{\bgz(\lz)\sqrt{\pi}}
\dint_{|x-y|}^{x+y}\phi_t(z) (xy)^{-1}z
\lf[1-\lf(\frac{x^2+y^2-z^2}{2xy}\r)^2\r]^{\lz-1}\,dz\\
&=\dint_0^\fz \phi_t(z)D(x, y, z)\,\dmz(z).
\end{array}
\end{equation}
From this together with (6) in \cite[p.\,335]{h}
and the change of variables, we further deduce
that for all $x,\,t\in (0, \fz)$,
\begin{equation}\label{2.2}
\dint_0^\fz \Phi_t(x, y)\,\dmz(y)
=\dint_0^\fz\dint_0^\fz \phi_t(z)D(x, y, z)\,\dmz(y)\,\dmz(z)
=\|\phi\|_\loz.
\end{equation}

By the homogeneity of $\loz$, we may assume that $\|\phi\|_\loz=1$.
Then by \eqref{1.2} and
\eqref{2.2}, $\Phi_t(x, y)$ is symmetric in $x$ and $y$, and satisfies
(v) of Definition \ref{d1.1}.
Moreover, it follows from \eqref{2.2} that $\{\Phi_t\}_{t>0}$ is
uniformly bounded on both $\loz$ and
$\linz$, which together with the Marcinkiewicz interpolation theorem
yields that $\{\Phi_t\}_{t>0}$ is also uniformly
bounded on $\ltz$. This can also be deduced from \eqref{2.2}, the H\"older inequality,
the symmetry of $\Phi_t(x, y)$ and the Fubini theorem as follows:
for all $f\in \ltz$,
\begin{equation*}
\begin{array}[t]{cl}
\dint_0^\fz\lf|\dint_0^\fz\Phi_t(x, y)f(y)\,dm_\lz(y)\r|^2\,dm_\lz(x)&
\le \dint_0^\fz \dint_0^\fz \Phi_t(x, y)|f(y)|^2\,dm_\lz(y)\,dm_\lz(x)\\
&=\dint_0^\fz|f(y)|^2\,dm_\lz(y).
\end{array}
\end{equation*}

Thus, by the symmetry of $\Phi_t(x, y)$,
to finish the proof of Lemma \ref{l2.1}, we still need to show that
$\{\Phi_t\}_{t>0}$ satisfies (i), (ii) and (iv) of Definition \ref{d1.1}.
We first prove that $\{\Phi_t\}_{t>0}$ satisfies Definition \ref{d1.1}(i).
By \eqref{1.4}, we obtain that for all $x,\,y,\,t\in (0, \fz)$,
\begin{equation}\label{2.3}
m_\lz(I(y, t))\ls m_\lz(I(x, t))+m_\lz(I(x, |x-y|)).
\end{equation}
Then Definition \ref{d1.1}(i) is reduced to showing that
for all $x,\,y,\,t\in (0, \fz)$,
\begin{equation}\label{2.4}
\Phi_t(x, y)\ls \frac 1{m_\lz(I(x, t)) +
m_\lz(I(x, |x-y|))}\frac {t}{t+|x-y|}.
\end{equation}
To this end, by \eqref{1.2} and \eqref{1.6}, we see that
\begin{equation}\label{2.5}
\begin{array}[b]{cl}
\Phi_t(x, y)&\sim t^{-2\lz-1}\dint_0^\pi\phi
\lf(\frac{\sqrt{x^2+y^2-2xy\cos \theta}}{t}\r)(\sin\theta)^{2\lz-1}\,d\theta\\
&\ls t^{-2\lz-1}\dint_0^\pi
\lf(1+\frac{x^2+y^2-2xy\cos \theta}{t^2}\r)^{-\lz-1}
(\sin\theta)^{2\lz-1}\,d\theta\\
&\ls \dint_0^\pi\frac{t(\sin\theta)^{2\lz-1}}{(t^2+x^2+y^2-2xy\cos \theta)^{\lz+1}}
\,d\theta.
\end{array}
\end{equation}

We then consider the following two cases.

{\it Case (i)} $t\ge x$ or $|x-y|\ge x/2$. In this case, from
\eqref{2.5} and the fact that
\begin{equation}\label{2.6}
\dint_0^\pi (\sin\theta)^{2\lz-1}\,d\theta=
\dfrac{\Gamma(\lz)\sqrt\pi}{\Gamma(\lz+\frac12)},
\end{equation}
we deduce that
\begin{equation*}
\Phi_t(x, y)\ls \frac t{(t^2+|x-y|^2)^{\lz+1}},
\end{equation*}
which together with \eqref{1.4} yields \eqref{2.4}.

{\it Case (ii)} $t< x$ and $|x-y|< x/2$. In this case, \eqref{2.4}
follows from
\begin{equation}\label{2.7}
\Phi_t(x, y)\ls \frac t{x^{2\lz}(t+|x-y|)^2}.
\end{equation}
Observe that in this case, $x\sim y$. Then by the fact that for all
$\theta\in(0, \pi/2]$, $\sin\theta\sim\theta$ and
$1-\cos\theta \ge 2(\theta/\pi)^2$, we have
\begin{equation*}
\begin{array}[t]{cl}
{\rm E}_1&\equiv\dint_0^{\pi/2}
\frac{t(\sin\theta)^{2\lz-1}}
{[t^2+|x-y|^2+2xy(1-\cos\theta)]^{\lz+1}}\,d\theta\\
&\ls\dint_0^{\pi/2}\frac{t\theta^{2\lz-1}}
{[t^2+|x-y|^2+4xy\theta^2/{\pi^2}]^{\lz+1}}\,d\theta\\
&\ls \dfrac t{(xy)^\lz(t^2+|x-y|^2)}\dint_0^\fz
\dfrac{\bz^{2\lz-1}}{(1+\bz^2)^{\lz+1}}\,d\bz\ls
\dfrac t{x^{2\lz}(t^2+|x-y|^2)}.
\end{array}
\end{equation*}
On the other hand, by \eqref{2.6} and the fact that
$\cos\theta<0$ for all $\theta\in (\pi/2, \pi]$, we have that
\begin{equation*}
\begin{array}[t]{cl}
{\rm E}_2&\equiv\dint_{\pi/2}^\pi
\frac{t(\sin\theta)^{2\lz-1}}
{(t^2+x^2+y^2-2xy\cos \theta)^{\lz+1}}\,d\theta\\
&\ls \dfrac t{x^{2\lz}(t^2+|x-y|^2)}\dint_{\pi/2}^\pi
(\sin\theta)^{2\lz-1}\,d\theta\ls\frac t{x^{2\lz}(t^2+|x-y|^2)},
\end{array}
\end{equation*}
which together with the estimate of ${\rm E}_1$ yields \eqref{2.7}.

We now show that $\{\Phi_t\}_{t>0}$ satisfies Definition \ref{d1.1}(ii). By
\eqref{2.3}, it suffices to show that for all
$x,\,\wz x,\,y,\,t\in (0, \fz)$ with $|x-\wz x|\le (t+|x-y|)/2$,
\begin{equation}\label{2.8}
{\rm F}\equiv|\Phi_t(x, y)-\Phi_t(\wz x, y)|\ls
\frac 1{m_\lz(I(x, t)) +
m_\lz(I(x, |x-y|))}\frac {t|x-\wz x|}{(t+|x-y|)^2}.
\end{equation}
Using the mean value theorem and \eqref{1.7}, we obtain that
\begin{equation}\label{2.9}
\begin{array}[b]{cl}
{\rm F}&\ls\dint_0^\pi t^{-2\lz-1}\lf|\phi
\lf(\frac{\sqrt{x^2+y^2-2xy\cos \theta}}{t}\r)\r.\\
&\quad\lf.-\phi\lf(\dfrac{\sqrt{\wz x^2+y^2-2\wz xy\cos \theta}}{t}\r)\r|
(\sin\theta)^{2\lz-1}\,d\theta\\
&\ls\dint_0^\pi t^{-2\lz-2}\lf|\phi'
\lf(\frac{\sqrt{\xi^2+y^2-2\xi y\cos\theta}}{t}\r)\r||x-\wz x|
(\sin\theta)^{2\lz-1}\,d\theta\\
&\ls\dint_0^{\pi}\frac{t|x-\wz x|(\sin\theta)^{2\lz-1}}
{(t^2+\xi^2+y^2-2\xi y\cos\theta)^{\lz+\frac32}}\,d\theta,
\end{array}
\end{equation}
where $\az\in(0, 1)$ and $\xi\equiv(1-\az)x+\az \wz x$.

We now prove \eqref{2.8} by considering the following two cases.

{\it Case (i)} $t\ge x$ or $|x-y|\ge x/2$. In this case,
\eqref{2.8} follows from
\begin{equation}\label{2.10}
{\rm F}\ls \frac {t|x-\wz x|}{(t+|x-y|)^{2\lz+3}}.
\end{equation}
By $|x-\wz x|\le (t+|x-y|)/2$ and the choice of $\xi$, we have
\begin{equation}\label{2.11}
t+|\xi-y|=t+|(1-\az)x+\az \wz x-y|\ge t+|x-y|-\az|x-\wz x|> (t+|x-y|)/2.
\end{equation}
Then \eqref{2.10} follows from \eqref{2.11} together with
\eqref{2.9} and \eqref{2.6} easily.

{\it Case (ii)} $t< x$ and $|x-y|< x/2$. In this case,
$$|x-\xi|<|x-\wz x|\le (t+|x-y|)/2<3x/4$$
 and hence $y\sim x\sim \wz x\sim \xi$.
This fact together with the fact that for all
$\theta\in (0, \pi/2]$, $\sin\theta\sim \theta$ and
$1-\cos\theta\ge 2(\theta/\pi)^2$ further implies that
\begin{equation*}
\begin{array}[t]{cl}
{\rm F}_1&\equiv\dint_0^{\pi/2}
\frac{t|x-\wz x|(\sin\theta)^{2\lz-1}}
{[t^2+|\xi-y|^2+2\xi y(1-\cos\theta)]^{\lz+\frac32}}\,d\theta\\
&\ls\dint_0^{\pi/2}\frac{t|x-\wz x|\theta^{2\lz-1}}
{[t^2+|\xi-y|^2+4\xi y\theta^2/{\pi^2}]^{\lz+\frac32}}\,d\theta\\
&\ls \dfrac {t|x-\wz x|}{(\xi y)^\lz(t+|x-y|)^3}\dint_0^\fz
\frac{\bz^{2\lz-1}}{(1+\bz^2)^{\lz+\frac32}}\,d\bz\ls
\frac {t|x-\wz x|}{x^{2\lz}(t+|x-y|)^3}.
\end{array}
\end{equation*}
On the other hand, from \eqref{2.11} and \eqref{2.6},
we deduce that
\begin{equation*}
\begin{array}[t]{cl}
{\rm F}_2&\equiv\dint_{\pi/2}^\pi
\frac{t|x-\wz x|(\sin\theta)^{2\lz-1}}
{(t^2+\xi^2+y^2-2\xi y\cos\theta)^{\lz+\frac32}}\,d\theta\\
&\ls \dfrac {t|x-\wz x|}{x^{2\lz}(t+|x-y|)^3}\dint_{\pi/2}^\pi
(\sin\theta)^{2\lz-1}\,d\theta\ls\frac {t|x-\wz x|}{x^{2\lz}(t+|x-y|)^3}.
\end{array}
\end{equation*}
Combining the estimates of ${\rm F}_1$
and ${\rm F}_2$, we obtain ${\rm F}\ls \frac {t|x-\wz x|}{x^{2\lz}(t+|x-y|)^3},$
which implies \eqref{2.8}.

Similarly, to show that $\{\Phi_t\}_{t>0}$ satisfies Definition \ref{d1.1}(iv), by
\eqref{2.3}, it suffices to show that for all $x$, $\wz x$, $y$, $\wz y$,
$t\in (0, \fz)$
with $|x-\wz x|\le (t+|x-y|)/3$ and $|y-\wz y|\le (t+|x-y|)/3$,
\begin{equation}\label{2.12}
\begin{array}[b]{cl}
{\rm G}&\equiv\lf|[\Phi_t(x,\,y)-\Phi_t(x,\,\wz y)]-
[\Phi_t(\wz x,\,y)-\Phi_t(\wz x,\,\wz y)]\r|\\
&\quad \ls\dfrac 1{m_\lz(I(x, t))+
m_\lz(I(x, |x-y|))}\frac {t|x-\wz x||y-\wz y|}{(t+|x-y|)^3}.
\end{array}
\end{equation}
Using the mean value theorem, \eqref{1.7} and \eqref{1.8}, we obtain that
\begin{equation*}
\begin{array}[t]{cl}
{\rm G}&\ls
\dint_0^\pi t^{-2\lz-1}|x-\wz x||y-\wz y|
\lf[\lf|\frac1 {t^2}\phi''\lf(
\dfrac{\sqrt{\xi_1^2+\xi_2^2-2\xi_1\xi_2\cos\theta}}t\r)\r|\r.\\
&\quad\lf.+\dfrac1{t\sqrt{\xi_1^2+\xi_2^2-2\xi_1\xi_2\cos\theta}}
\lf|\phi'\lf(\dfrac{\sqrt{\xi_1^2+\xi_2^2-2\xi_1\xi_2\cos\theta}}{t}
\r)\r|\r](\sin\theta)^{2\lz-1}\,d\theta\\
&\ls \dint_0^{\pi}\frac{t|x-\wz x||y-\wz y|(\sin\theta)^{2\lz-1}}
{(t^2+\xi_1^2+\xi_2^2-2\xi_1\xi_2\cos\theta)^{\lz+2}}\,d\theta,
\end{array}
\end{equation*}
where $\az$, $\bz\in(0, 1)$, $\xi_1\equiv(1-\az)x+\az \wz x$
and $\xi_2\equiv(1-\bz)y+\bz \wz y$.

To prove \eqref{2.12}, we consider the following two cases.

{\it Case (i)} $t\ge x$ or $|x-y|\ge x/2$ or $t\ge y$. In this case,
by $|x-\wz x|\le (t+|x-y|)/3$ and $|y-\wz y|\le (t+|x-y|)/3$, we have
 \begin{equation}\label{2.13}
t+|\xi_2-\xi_1|\ge t+|y-x|-|x-\wz x|-|y-\wz y|\ge (t+|x-y|)/3.
 \end{equation}
 This together with \eqref{2.6} yields that
 \begin{equation*}
{\rm G}\ls \frac{t|x-\wz x||y-\wz y|}{(t+|x-y|)^{2\lz+4}},
 \end{equation*}
 which implies \eqref{2.12} in this case.

 {\it Case (ii)} $t<x$, $|x-y|< x/2$ and $t< y$.
 In this case, $\wz x$, $y\in(x/2, 3x/2)$ and $\wz y\in(x/6, 13x/6)$.
 Moreover, we see that $\xi_2\sim \wz y\sim y\sim x\sim \wz x\sim \xi_1$.
 From this together with \eqref{2.13}, \eqref{2.6}
 and the fact that for all $\theta\in(0, \pi/2]$,
 $\sin\theta\sim \theta$ and $1-\cos\theta\ge 2(\theta/\pi)^2$,
 we further deduce that
 \begin{equation*}
\begin{array}[t]{cl}
{\rm G}&\ls \dint_0^{\pi/2}
\frac{t|x-\wz x||y-\wz y|(\sin\theta)^{2\lz-1}}
{[t^2+|\xi_1-\xi_2|^2+2\xi_1\xi_2(1-\cos\theta)]^{\lz+2}}\,d\theta
+\dint_{\pi/2}^\pi\cdots\\
&\ls \dfrac {t|x-\wz x||y-\wz y|}{(\xi_1\xi_2)^\lz(t+|\xi_1-\xi_2|)^4}\dint_0^\fz
\dfrac{\bz^{2\lz-1}}{(1+\bz^2)^{\lz+2}}\,d\bz+
\dfrac{t|x-\wz x||y-\wz y|}{(t^2+\xi_1^2+\xi_2^2)^{\lz+2}}\\
&\ls\dfrac{t|x-\wz x||y-\wz y|}{x^{2\lz}(t+|x-y|)^4}.
\end{array}
 \end{equation*}
This implies \eqref{2.12} and hence finishes the proof of Lemma \ref{l2.1}.
\end{proof}

\begin{proof}[Proof of Theorem \ref{t1.1}]\rm
Let $p$, $\ez$, $\bz$, $\gz$ and $a$ be as in Theorem 1.1,
$\phi\in\zlz$ and $f\in (\cg^\ez_0(\bz, \gz))'$.
From \cite[Theorem 3.1]{yz} (see also \cite[Corollary 1.8]{gly2})
together with Lemma \ref{l2.1}, we deduce that
$f\in\hpz$ if and only if
$\cmr(f)\in\lpz$ and $\|f\|_\hpz\sim\|\cmr(f)\|_\lpz$.
Furthermore, \cite[Corollary 4.18]{gly1} implies that
$f\in\hpz$ if and only if $\cmn(f)\in\lpz$ if and only if $\cmg(f)\in\lpz$); moreover,
$$\|f\|_\hpz\sim\|\cmn(f)\|_\lpz\sim\lf\|\cmg(f)\r\|_\lpz.$$
Finally, it follows from \cite[Theorems 5.13 and 5.16]{hmy} that
$f\in\hpz$ if and only if $g(f)\in\lpz$ if and only if $S_a(f)\in\lpz$; moreover,
$$\|f\|_\hpz\sim\|g(f)\|_\lpz\sim\lf\|S_a(f)\r\|_\lpz.$$
Combining these facts, we then complete the proof of Theorem \ref{t1.1}.
\end{proof}

\section{\hspace{-0.6cm}.\hspace{0.4cm}Proof of Theorem \ref{t1.2}}\label{s3}

\noindent
In this section, we present the proof of Theorem \ref{t1.2}.
We begin with the following lemma on the boundedness of
$\triz$.

\begin{lem}\label{l3.1}
Let $\ez\in (0, 1]$ and $\bz,\,\gz,\,\wz\ez\in(0, \ez)$. Then
$\triz$ is bounded from $\ocg^{\wz\ez}_0(\bz, \gz)$ to $\cg^{\wz\ez}_0(\bz, \gz)$.
\end{lem}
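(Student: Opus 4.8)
The plan is to prove the boundedness of $\triz$ on the test-function spaces by working directly with the Calder\'on--Zygmund kernel of the adjoint Riesz transform and verifying, by hand, the two defining estimates $\mathrm{(G)_i}$ and $\mathrm{(G)_{ii}}$ of Definition \ref{d1.2} for $\triz(\psi)$ when $\psi\in\ocg(x_0,r,\ez,\ez)$, together with the fact that $\triz$ preserves the cancellation-free space (it does \emph{not} preserve the mean-value-zero condition, which is exactly why the target is $\cg^{\wz\ez}_0$ and not $\ocg^{\wz\ez}_0$). First I would recall from \cite{bfbmt,bfs} that $\triz$ is a Calder\'on--Zygmund operator on the space of homogeneous type $((0,\fz),\rho,dm_\lz)$, with kernel $\wz Q_0^{[\lz]}(x,y)=Q_0^{[\lz]}(y,x)$ satisfying the size bound $|\wz Q_0^{[\lz]}(x,y)|\ls 1/m_\lz(I(x,|x-y|))$ and the regularity bound $|\wz Q_0^{[\lz]}(x,y)-\wz Q_0^{[\lz]}(x',y)|\ls (|x-x'|/|x-y|)/m_\lz(I(x,|x-y|))$ for $|x-x'|\le|x-y|/2$ (and the analogous estimate in $y$). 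Since $\psi\in\ocg(x_0,r,\ez,\ez)$ has mean zero, $\triz(\psi)(x)=\int_0^\fz\wz Q_0^{[\lz]}(x,y)\psi(y)\,dm_\lz(y)=\int_0^\fz[\wz Q_0^{[\lz]}(x,y)-\wz Q_0^{[\lz]}(x,x_0)]\psi(y)\,dm_\lz(y)$, which is the representation I would exploit throughout.

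The key steps, in order: (1) Establish $\mathrm{(G)_i}$ for $\triz(\psi)$ centered at $x_0$ with the same radius $r$. Split the integral over the region $|y-x_0|\le (r+|x-x_0|)/2$ (near) and its complement (far). In the far region use the raw size bound on $\wz Q_0^{[\lz]}$ together with the decay of $\psi$ from $\mathrm{(G)_{i'}}$-type estimates (rescaled to center $x_0$, radius $r$) and the doubling property \eqref{1.5}; in the near region use the kernel regularity in the first variable — here $|y-x_0|\le|x-y|/\,$something fails in general, so one must instead use regularity in the \emph{second} variable, writing $\wz Q_0^{[\lz]}(x,y)-\wz Q_0^{[\lz]}(x,x_0)$ and bounding by $(|y-x_0|/|x-x_0|)/m_\lz(I(x,|x-x_0|))$, then integrating against $|\psi(y)|$ and using $\int|y-x_0|\,|\psi(y)|\,dm_\lz(y)\ls r\,$ (finite, by the test-function decay with $\gz=\ez$). (2) Establish $\mathrm{(G)_{ii}}$, i.e. the H\"older estimate for $\triz(\psi)(x)-\triz(\psi)(x')$ when $|x-x'|\le(r+|x-x_0|)/2$; again split near/far relative to the ball of radius comparable to $|x-x_0|+r$ around $x_0$, use second-variable kernel regularity (double difference) in one region and first-variable regularity plus the already-proven size bound in the other, and use the mean-zero property of $\psi$ to produce the needed extra factor $|y-x_0|$ where convergence would otherwise fail. (3) Finally, since $\triz$ is bounded on $\ltz$ and maps $\ocg(x_0,r,\ez,\ez)$ continuously into $\cg(x_0,r,\bz,\gz)$ for the relevant exponents, pass to completions: $\triz$ extends to a bounded map $\ocg^{\wz\ez}_0(\bz,\gz)\to\cg^{\wz\ez}_0(\bz,\gz)$, using the stability of these spaces under change of the smoothness/decay parameters recorded after Definition \ref{d1.3} (the $\yz09$-type identification).

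The main obstacle I expect is Step (2): the second-order (double-difference) estimate requires simultaneously exploiting kernel smoothness in \emph{both} variables while the cancellation of $\psi$ only furnishes one power of $|y-x_0|$, so one has to allocate smoothness carefully — using first-variable regularity to extract $|x-x'|^\bz$ with $\bz<\ez$ (so that a borderline logarithmic divergence is absorbed by choosing $\bz$ strictly below $\ez$), and second-variable regularity together with the mean-zero property to control the tail. The geometry of $((0,\fz),dm_\lz)$ near the boundary $x=0$ (where $m_\lz(I(x,r))\sim r^{2\lz+1}$ rather than $\sim x^{2\lz}r$) must be handled by \eqref{1.3}--\eqref{1.5} and does not cause an essential difficulty, but it does force one to carry the measure $m_\lz$ rather than Lebesgue measure through every integration, which I would organize via \eqref{1.4}. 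Everything else is a routine — if lengthy — Calder\'on--Zygmund computation modeled on \cite[Lemma 2.42]{bdt}.
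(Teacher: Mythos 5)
There is a genuine gap in your plan, and it sits exactly where you locate the ``main obstacle,'' in Step (2) (and in fact already in Step (1) near the diagonal). Kernel size and regularity bounds for $\triz(x,y)=Q_0^{[\lz]}(y,x)$, together with the mean-zero property of $\psi\in\ocg(x_0,r,\ez,\ez)$, are \emph{not} enough to prove the smoothness estimate ${\rm (G)_{ii}}$ for $\triz(\psi)$: when you estimate $\triz(\psi)(x)-\triz(\psi)(x')$ you must handle the region $|y-x|\ls|x-x'|$, where the first-variable regularity bound is unavailable and the kernel is non-integrable. The standard remedy is to subtract $\psi(x)$ and absorb the remainder into a truncated principal-value term $\psi(x)\cdot{\rm p.v.}\int K(x,y)\,\dmz(y)$; the cancellation of $\psi$ (which is global, centered at $x_0$) does nothing for this local term, and its uniform control is precisely a $T(1)$-type condition that does not follow from size, regularity and $L^2$-boundedness alone. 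In other words, a Calder\'on--Zygmund operator maps the test-function space with cancellation into the test-function space (with the H\"older condition ${\rm (G)_{ii}}$) only under the additional hypothesis that $T(1)$ is constant; for a generic CZ kernel the image is merely BMO-regular and ${\rm (G)_{ii}}$ fails. Your proposal never addresses this condition, and choosing $\bz<\ez$ to absorb a logarithm does not substitute for it.

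This is exactly the point the paper's proof is organized around: it does not redo the Calder\'on--Zygmund computation at all, but checks that $\triz(x,y)$ satisfies the kernel hypotheses of \cite[Theorem 2.18, Corollary 2.24]{hmy} and then reduces Lemma \ref{l3.1} to showing that $\triz(1)$ is a constant. That fact is the real content: one first gets $\triz(1)\in\bmoz$ from the $\hoz\to\loz$ boundedness of $\riz$ by duality, and then a change of variables $y\mapsto xz$ in the explicit formula for $Q_0^{[\lz]}$ shows $\triz(1)(x)=\triz(1)(1)$ for every $x$, i.e.\ $\triz(1)$ is constant. If you insist on a hands-on proof along your lines, you would have to prove an equivalent cancellation statement by hand (uniform bounds on the truncated integrals $\int_{\dz<|x-y|<R}Q_0^{[\lz]}(y,x)\,\dmz(y)$, or directly that $\triz(1)$ is constant), which is the step your outline omits; the remaining far-field and decay estimates in your Steps (1)--(3) are indeed routine, but they are not where the difficulty of the lemma lies.
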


\begin{proof}
By \cite[p.\,87]{ms}, we have that $\riz$ is bounded on
$\lrz$ for $r\in (1, \fz)$,  and  so is $\triz$.
Recall that the kernel, denoted by $\triz(x, y)$,  of $\triz$ satisfies that
for all $x,\, y\in (0, \fz)$,
\begin{equation*}
\triz(x, y)= Q^{[\lz]}_0(y, x)\equiv-\frac{2\lz}{\pi}
\dint_0^\pi\frac{(y-x\cos\theta)(\sin\theta)^{2\lz-1}}
{(x^2+y^2-2xy\cos\theta)^{\lz+1}}\,d\theta;
\end{equation*}
see Lemma 2.42 in \cite{bdt}.
It is easy to see that $\triz(x, y)$
satisfies Conditions (i)-(iii) of Theorem 2.18 in
\cite{hmy} with $\ez=1$ therein (see also \cite{bfs}), which means the  $\triz(x, y)$
also satisfies the hypotheses of Corollary 2.24 in \cite{hmy}.
By this and \cite[Remark 2.14(iii), Remark 2.17, Corollary 2.24]{hmy},
Lemma \ref{l3.1} is
reduced to showing that $\triz(1)\in \bmoz$ is a constant, where
$f\in L^1_{\rm loc}(0, \fz)$ is called to belong to
the {\it space} $\bmoz$ if
\begin{equation*}
\sup_{x,\,r\in(0,\,\fz)}\frac1{m_\lz(I(x,\,r))}\dint_{I(x,\,r)}
\lf|f(y)-\frac1{m_\lz(I(x, r))}\dint_{I(x,\,r)}f(z)\,dm_\lz(z)\r|\,dm_\lz(y)<\fz.
\end{equation*}
Recall that $\bmoz$ is the dual space of $\hoz$ and
$\riz$ is bounded from $\hoz$ to $\loz$ (see \cite{cw77}).
Then by Theorem 4.10 in \cite{r}, we see that $\triz$ is bounded
from $\linz$ to $\bmoz$, which implies that $\triz(1)\in \bmoz$.
Moreover, for any $x\in(0, \fz)$,
\begin{equation*}
\begin{array}[t]{cl}
\triz(1)(x)&=\dlim_{\dz\to0}\dint_{0,\,|x-y|>\dz}^\fz Q_0^{[\lz]}(y, x)y^{2\lz}\,dy\\
&=-\dfrac{2\lz}{\pi}\dlim_{\dz\to0}\dint_{0,\,|x-y|>\dz}^\fz
\dint_0^\pi\frac{(y-x\cos\theta)(\sin\theta)^{2\lz-1}}
{(x^2+y^2-2xy\cos\theta)^{\lz+1}}\,d\theta y^{2\lz}\,dy\\
&=-\dfrac{2\lz}{\pi}\dlim_{\dz\to0}\int_{0,\,|1-z|>\dz}^\fz
\dint_0^\pi\frac{(z-\cos\theta)(\sin\theta)^{2\lz-1}}
{(z^2+1-2z\cos\theta)^{\lz+1}}\,d\theta z^{2\lz}\,dz=\triz(1)(1).
\end{array}
\end{equation*}
This implies that $\triz(1)$ is a constant and hence finishes the proof
Lemma \ref{l3.1}.
\end{proof}

We next establish the boundedness of $\riz$ on $\hpz$. To this end,
for any $x_0,\,x,\,r\in (0, \fz)$, let $d_\lz(x, x_0)\equiv|\int_x^{x_0}y^{2\lz}\,dy|$,
$I_{d_\lz}(x_0, r)\equiv \{x\in(0, \fz): d_\lz(x, x_0)<r\}$
and for all $k\in\nn$,
$$R_k(I_{d_\lz}(x_0, r))\equiv\{x\in (0, \fz):\,\, 2^{k-1}
m_\lz(I_{d_\lz}(x_0, r))\le d_\lz(x, x_0)<
2^km_\lz(I_{d_\lz}(x_0, r))\}.$$
Then $d_\lz$ is the measure distance
and satisfies that for any $x_0,\,r\in(0, \fz)$,
\begin{equation}\label{3.1}
m_\lz(I_{d_\lz}(x_0, r))\sim r;
\end{equation}
see Theorem 3 in \cite{ms1}.
{\it Write $H^p((0, \fz), dm_\lz)$  as
$H^p((0, \fz), \rho, dm_\lz)$ for the moment},
where $\rho(x,y)\equiv|x-y|$ for all $x,\,y\in(0, \fz)$.
If we replace $\rho$ by $d_\lz$ in Definition \ref{d1.3},
we then obtain the corresponding $H^p((0, \fz), d_\lz, dm_\lz)$-atoms and
the Hardy spaces $H^p((0, \fz), d_\lz, dm_\lz)$. In \cite[Theorems 2.1]{hyz},
it was proved that for any $p\in((2\lz+1)/(2\lz+2), 1]$, the spaces
\begin{equation}\label{3.2}
H^p((0, \fz), \rho, dm_\lz)= H^p((0, \fz), d_\lz, dm_\lz)
\end{equation}
with equivalent norms.

We recall the notion of
molecules in \cite{hyz} as follows; see also \cite{cw77, ms2}.

\begin{defn}\label{d3.1}
Let $p\in((2\lz+1)/(2\lz+2), 1]$ and $\eta\equiv\{\eta_k\}_{k\in\nn}\subset[0, \fz)$
such that $\sum_{k=1}^\fz k\eta_k<\fz$ when $p=1$, or
$\sum_{k=1}^\fz(\eta_k)^p 2^{k(1-p)}<\fz$ when $p\in(0, 1)$.
A function $M\in\ltz$ is called a $(p, 2, \eta)$-molecule centered at an interval
$I_{d_\lz}\equiv I_{d_\lz}(y_0, r_0)$ for some $y_0$, $r_0\in(0, \fz)$, if

${\rm (M)_i}$ $\|M\|_\ltz\le [m_\lz(I_{d_\lz})]^{1/2-1/p};$

${\rm (M)_{ii}}$ for all $k\in\nn$,
$\|M\chi_{R_k(I_{d_\lz})}\|_\ltz\le \eta_k 2^{-k/2}[m_\lz(I_{d_\lz})]^{1/2-1/p};$

${\rm (M)_{iii}}$ $\int_0^\fz M(x)x^{2\lz}\,dx=0.$
\end{defn}

\begin{lem}\label{l3.2}
Let $p\in ((2\lz+1)/(2\lz+2), 1]$. Then $\riz$ is bounded from $\hpz$ to
$\lpz$ and bounded on $\hpz$.
\end{lem}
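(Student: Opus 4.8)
The plan is to prove Lemma \ref{l3.2} by combining the molecular characterization of $\hpz$ with a uniform boundedness criterion for sublinear operators on Hardy spaces over RD-spaces. First I would reduce matters to a single molecule (or atom): by Definition \ref{d1.3}, the identity \eqref{3.2} of Hardy spaces with the two distances, and the molecular characterization in \cite[Theorems 2.1 and 2.2]{hyz}, it suffices to verify a uniform estimate of the form $\|\riz(a)\|_\lpz\ls 1$ for every $\hpz$-atom $a$, and then invoke the density/completeness argument (as in \cite{yz08} or \cite{gly1}) to pass to a general $f=\sum_j\alpha_j a_j\in\hpz$; the same scheme, applied with $\hpz$ in place of $\lpz$ on the target side, gives boundedness of $\riz$ on $\hpz$ itself once we know $\riz(a)$ is (a fixed multiple of) a molecule.

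Next I would recall that, by \cite{bfs}, $\riz$ is a Calder\'on--Zygmund operator on the space $((0,\fz),\rho,dm_\lz)$ of homogeneous type, bounded from $\hoz$ to $\loz$, and bounded on $\lrz$ for $r\in(1,\fz)$. The core computation is then to fix an $\hpz$-atom $a$ supported in an interval $I=I(x_0,r_0)$ with the normalization and cancellation of Definition \ref{d1.3}, and to show that $\riz(a)$ is, up to a harmless constant, a $(p,2,\eta)$-molecule centered at a suitable $d_\lz$-ball $I_{d_\lz}$ comparable to $I$. Property ${\rm (M)_{iii}}$ is immediate from the fact that $\riz$ (being essentially $Q^{[\lz]}_0$) annihilates functions with cancellation, or more precisely from the boundedness of $\riz$ from $\hoz$ to $\loz$ combined with the pairing identity in the definition of $\riz(f)\in(\ocg(\bz,\gz))'$; property ${\rm (M)_i}$ follows from the $L^2$-boundedness of $\riz$ together with the $L^2$-size of the atom; and property ${\rm (M)_{ii}}$, the decay on the annuli $R_k(I_{d_\lz})$, is obtained from the Calder\'on--Zygmund kernel estimates on $Q^{[\lz]}_0(x,y)$ — using the cancellation of $a$ to gain a factor $d_\lz(y,y_0)/d_\lz(x,y_0)$ on each annulus, then Cauchy--Schwarz in $y$ over $\supp a$ and the doubling property \eqref{3.1} of $d_\lz$ — which produces coefficients $\eta_k$ decaying geometrically, hence summable in the sense required by Definition \ref{d3.1} for every $p\in((2\lz+1)/(2\lz+2),1]$.

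Having shown that $\riz$ maps atoms to fixed multiples of molecules, I would then apply \cite[Theorem 2.2]{hyz} (molecules belong to $\hpz$ with controlled norm) to conclude $\|\riz(a)\|_\hpz\ls 1$, and the continuous embedding $\hpz\hookrightarrow\lpz$ (or directly ${\rm (M)_i}$ and ${\rm (M)_{ii)}}$ summed over $k$) to conclude $\|\riz(a)\|_\lpz\ls 1$; then the sublinearity criterion of \cite{yz08,gly1} upgrades these atomic estimates to the full statements $\|\riz(f)\|_\lpz\ls\|f\|_\hpz$ and $\|\riz(f)\|_\hpz\ls\|f\|_\hpz$ for all $f\in\hpz$.

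The main obstacle I anticipate is the annular decay estimate ${\rm (M)_{ii}}$: one must carefully exploit the explicit form of $Q^{[\lz]}_0(x,y)=-\frac{2\lz}{\pi}\int_0^\pi(x-y\cos\theta)(\sin\theta)^{2\lz-1}(x^2+y^2-2xy\cos\theta)^{-\lz-1}\,d\theta$ to extract a smoothness/Lipschitz bound in $y$ of the right order (Conditions (i)--(iii) of \cite[Theorem 2.18]{hmy}, as already used in Lemma \ref{l3.1}), and then track constants through the change from the interval $I$ and the Euclidean distance $\rho$ to the measure distance $d_\lz$ and the ball $I_{d_\lz}$, where \eqref{3.1} and \eqref{3.2} are what keeps everything consistent; the low-dimensional threshold $p>(2\lz+1)/(2\lz+2)$ is exactly what is needed for the resulting coefficients $\{\eta_k\}$ to satisfy the molecular summability condition.
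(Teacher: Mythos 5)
Your overall skeleton coincides with the paper's: show that $\riz$ maps each $\hpz$-atom $a$ to a fixed multiple of a $(p,2,\eta)$-molecule, then invoke \eqref{3.2}, the molecular characterization in \cite[Theorems 2.1 and 2.2]{hyz}, and the atomic boundedness criterion of \cite{yz08,gly1}. The verification of ${\rm (M)_i}$ via $L^2$-boundedness is fine, and for ${\rm (M)_{iii}}$ your appeal to the pairing is on the right track, but ``$\riz$ annihilates functions with cancellation'' is not a justification: one needs $\riz(a)\in\loz$ (from the $\hoz\to\loz$ boundedness) and the fact that $\triz(1)$ is a \emph{constant}, which is exactly the content of Lemma \ref{l3.1}, to write $\int_0^\fz\riz(a)\,\dmz=\langle a,\triz(1)\rangle=0$.

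The genuine gap is in ${\rm (M)_{ii}}$, which is the heart of the lemma. You claim that the cancellation of $a$ yields a gain $d_\lz(y,y_0)/d_\lz(x,y_0)\sim 2^{-k}$ on the annulus $R_k(I_{d_\lz})$. But the smoothness of $Q_0^{[\lz]}$ is with respect to the Euclidean distance: the mean value theorem produces the factor $|y-x_0|/|x-x_0|\le r/|x-x_0|$, not a factor measured in $d_\lz$. On $R_k(I_{d_\lz})$ one has $d_\lz(x,x_0)\sim 2^k m_\lz(I)$, and when $I=I(x_0,r)$ lies near the origin (so $m_\lz(I)\sim r^{2\lz+1}$) and $x>2x_0$, this only gives $|x-x_0|\sim[2^km_\lz(I)]^{1/(2\lz+1)}$, hence a gain $r/|x-x_0|\sim 2^{-k/(2\lz+1)}$, much weaker than $2^{-k}$. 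This is precisely why the paper splits $(0,\fz)\setminus(2I)$ into $x>2x_0$, $x<x_0/2$ and $x\in[x_0/2,2x_0]$, proves the two different pointwise bounds \eqref{3.7} and \eqref{3.8}, and converts $|x-x_0|$ into $d_\lz(x,x_0)$ separately in each region; the outcome is \eqref{3.6}, i.e.\ $\eta_k\sim 2^{-k/(2\lz+1)}$, and it is exactly this rate that makes $\sum_k(\eta_k)^p2^{k(1-p)}<\fz$ equivalent to $p>(2\lz+1)/(2\lz+2)$. Note the internal inconsistency in your sketch: were the gain really $2^{-k}$, the molecular summability condition would hold for all $p>1/2$, contradicting your closing remark that the threshold is exactly what is needed. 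Two smaller omissions: for the finitely many $k$ for which $R_k(I_{d_\lz})$ meets $2I$ the kernel estimate is unavailable and one must fall back on the $L^2$ bound \eqref{3.5}; and the bound \eqref{3.8} in the region $x\sim x_0$, $x_0\ge 2r$ (where $|x-x_0|\sim 2^km_\lz(I)x_0^{-2\lz}$) cannot be replaced by the crude bound \eqref{3.7}, since the latter loses the factor $x^\lz x_0^\lz$ that the measure of the annulus requires.
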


\begin{proof}
We only show that $\riz$ is bounded on $\hpz$, since the proof for
the boundedness of $\riz$ from $\hpz$ to $\lpz$ is similar and easier.
Assume that $a$ is an $\hpz$-atom such that
$\supp(a)\subset I\equiv I(x_0, r)$ for some $x_0,\,r\in(0, \fz)$.
By Theorem 1.1 in \cite{yz08} (see also \cite[Theorem 5.9]{gly1}),
we only need to show that there exists a positive constant $C$,
independent of $a$, such that $\|\riz(a)\|_\hpz\le C$.
From Theorem 2.2 in \cite{hyz}, we deduce that there exists a
positive constant $\wz C$ such that for any $(p, 2, \eta)$-molecule
$M$ as in  Definition \ref{d3.1},
$M\in H^p((0, \fz), d_\lz, dm_\lz)$ and $\|M\|_{H^p((0, \,\fz),
\,d_\lz,\,dm_\lz)}\le \wz C$.
Via this and \eqref{3.2}, it suffices to show that
$\riz(a)$ is a $(p, 2, \eta)$-molecule
centered at the interval
$I_{d_\lz}\equiv I_{d_\lz}(x_0, m_\lz(I))$
with $\eta\equiv\{2^{-\frac k{2\lz+1}}\}_{k\in\nn}$.

Recall that $\riz$ is bounded from $\hoz$ to $\loz$. This together with $a\in \hoz$
implies that $\riz(a)\in\loz$. Then from this
observation and Lemma \ref{l3.1}, it follows that
\begin{equation*}
\dint_0^\fz \riz(a)(x)x^{2\lz}\,dx=\langle\riz(a), 1\rangle
=\langle a, \triz(1)\rangle=0.
\end{equation*}
On the other hand, by \eqref{3.1}, we see that
\begin{equation}\label{3.3}
m_\lz(I_{d_\lz})\sim m_\lz(I)
\end{equation}
and for each $k\in\nn$,
\begin{equation}\label{3.4}
m_\lz(R_k(I_{d_\lz}))\ls 2^km_\lz(I).
\end{equation}
Applying the boundedness of $\riz$ on $\ltz$, Definition \ref{d1.3} and \eqref{3.3},
we have
\begin{equation}\label{3.5}
\begin{array}[t]{cl}
\lf\|\riz (a)\r\|_\ltz&\ls\|a\|_\ltz
\ls [m_\lz(I)]^{1/2-1/p}
\ls [m_\lz(I_{d_\lz})]^{1/2-1/p}.
\end{array}
\end{equation}
Thus, via \eqref{3.3}, Lemma \ref{l3.2} is reduced to showing that for each $k\in\nn$,
\begin{equation}\label{3.6}
\begin{array}[t]{cl}
\lf\|\lf[\riz (a)\r]\chi_{R_k\lf(I_{d_\lz}\r)}\r\|_\ltz
\ls (2^k)^{-\frac {\lz+3/2}{2\lz+1}}[m_\lz(I)]^{1/2-1/p}.
\end{array}
\end{equation}

If $x\in 2I$, then by \eqref{1.5}, we see that
\begin{equation*}
d_\lz(x, x_0)=\lf|\int_x^{x_0}y^{2\lz}\,dy\r|
\le m_\lz(2I)\ls 2^{2\lz+1}m_\lz(I).
\end{equation*}
This together with \eqref{3.1}  and \eqref{3.3} implies that
there exists $K_0\in\nn$ such that $(2I)\cap R_k(I_{d_\lz})=\emptyset$
for all $k> K_0$.
Furthermore, \eqref{3.5} implies \eqref{3.6} for all $k\in\nn$ and $k\le K_0$.

To prove \eqref{3.6} for $k> K_0$,
we first claim that for any $x\in (0, \fz)\setminus (2I)$,
\begin{equation}\label{3.7}
\lf|\riz(a)(x)\r|\ls \frac{r[m_\lz(I)]^{1-1/p}}{|x-x_0|^{2\lz+2}},
\end{equation}
and when $x_0\ge 2r$,
\begin{equation}\label{3.8}
\lf|\riz(a)(x)\r|\ls \frac{r[m_\lz(I)]^{1-1/p}}{|x-x_0|^2x^\lz x_0^\lz}.
\end{equation}
Indeed, by the vanishing moment of $a$ and the mean value theorem, we obtain that
\begin{equation}\label{3.9}
\begin{array}[b]{cl}
\lf|\riz(a)(x)\r|&\sim
\lf|\dint_I\lf[Q^{[\lz]}_0(x, y)-Q^{[\lz]}_0(x, x_0)\r]a(y)y^{2\lz}\,dy\r|\\
&\ls r\dint_I\dint_0^\pi\frac{(\sin\theta)^{2\lz-1}}
{(x^2+\xi^2-2x\xi\cos\theta)^{\lz+1}}\,d\theta|a(y)|y^{2\lz}\,dy,
\end{array}
\end{equation}
where $\xi\equiv\az x_0+(1-\az)y$ for $y\in I$ and some $\az\in (0, 1)$.

On the one hand, since $x\in (0, \fz)\setminus (2I)$, then
 $x^2+\xi^2-2x\xi\cos\theta\ge(x-\xi)^2\gs(x-x_0)^2$,
 which together with \eqref{3.9} implies \eqref{3.7}.
On the other hand, when $x_0\ge 2r$, we have that for all
$y\in I=(x_0-r, x_0+r)$, $y\sim x_0$. This implies that $\xi\sim x_0$.
Then by some estimates similar to those used in the
estimate \eqref{2.7}, we further see that
\begin{equation*}
\dint_0^\pi\frac{(\sin\theta)^{2\lz-1}}
{(x^2+\xi^2-2x\xi\cos\theta)^{\lz+1}}\,d\theta\ls
\frac1{|x-x_0|^2x^\lz x_0^\lz}.
\end{equation*}
This together with \eqref{3.9} and $\|a\|_\loz\le [m_\lz(I)]^{1-1/p}$
implies \eqref{3.8}.

For $x> 2x_0$, the mean value theorem implies that
$$d_\lz(x, x_0)\sim x^{2\lz+1}-x_0^{2\lz+1}\sim \zeta^{2\lz}(x-x_0)\ls
x^{2\lz}(x-x_0)\ls(x-x_0)^{2\lz+1}$$
for some $\beta\in (0, 1)$ and  $\zeta\equiv\beta x_0+(1-\bz)x$.
This together with $x\in R_k(I_{d_\lz})$ leads to that
$2^km_\lz(I)\ls (x-x_0)^{2\lz+1}$.
By this, \eqref{3.7}, \eqref{3.4} and \eqref{1.4}, we further see that
\begin{equation*}
\begin{array}[t]{cl}
{\rm F_1}&\equiv\lf\{\dint_{2x_0}^\fz\lf[\riz (a)(x)\r]^2
\chi_{R_k\lf(I_{d_\lz}\r)}(x)\,dm_\lz(x)\r\}^{1/2}\\
&\ls r[m_\lz(I)]^{1-1/p}\lf\{\dint_{R_k\lf(I_{d_\lz}\r)}
\frac1{|x-x_0|^{4\lz+4}}\,dm_\lz(x)\r\}^{1/2}\\
&\ls r[m_\lz(I)]^{1-1/p}[2^km_\lz(I)]^{-\frac {\lz+3/2}{2\lz+1}}
\ls [m_\lz(I)]^{1/2-1/p}(2^k)^{-\frac {\lz+3/2}{2\lz+1}}.
\end{array}
\end{equation*}
Similarly,
\begin{equation*}
\begin{array}[t]{cl}
{\rm F_2}&\equiv\lf\{\dint_{0}^{x_0/2}\lf[\riz (a)(x)\r]^2
\chi_{R_k\lf(I_{d_\lz}\r)}(x)\,dm_\lz(x)\r\}^{1/2}
\ls [m_\lz(I)]^{1/2-1/p}(2^k)^{-\frac {\lz+3/2}{2\lz+1}}.
\end{array}
\end{equation*}

Assume that there exists $x\in [x_0/2, 2x_0]\cap R_k(I_{d_\lz})$. Then
since $R_k(I_{d_\lz})\cap (2I)=\emptyset$, we see that $[x_0/2, 2x_0]\nsubseteq (2I)$,
which further implies that
$x_0> 2r$. Using this fact together with the mean value theorem and the fact that
$x\sim x_0$, we have that $d_\lz(x, x_0)\sim x_0^{2\lz}|x-x_0|$.
This implies that for any $x\in R_k(I_{d_\lz})$,
$|x-x_0|\sim 2^km_\lz(I)x_0^{-2\lz}.$
Thus, from this fact together with \eqref{3.8} and \eqref{1.4}, we deduce that
\begin{equation*}
\begin{array}[t]{cl}
{\rm F}_3&\equiv\lf\{\dint_{x_0/2}^{2x_0}\lf[\riz (a)(x)\r]^2
\chi_{R_k\lf(I_{d_\lz}\r)}(x)\,dm_\lz(x)\r\}^{1/2}\\
&\ls r[m_\lz(I)]^{1-1/p}\lf\{\dint_{R_k\lf(I_{d_\lz}\r)}
\frac1{|x-x_0|^4x_0^{4\lz}}\,dm_\lz(x)\r\}^{1/2}\\
&\ls r[m_\lz(I)]^{1-1/p}\lf\{\dfrac{m_\lz(R_k(I_{d_\lz}))}{[2^km_\lz(I)]^4
x_0^{-4\lz}}\r\}^{1/2}\ls r[m_\lz(I)]^{1-1/p}[2^km_\lz(I)]^{-3/2}x_0^{2\lz}\\
&\ls [m_\lz(I)]^{1/2-1/p}(2^k)^{-\frac 32}
\ls [m_\lz(I)]^{1/2-1/p}(2^k)^{-\frac {\lz+3/2}{2\lz+1}}.
\end{array}
\end{equation*}

Combining the estimates of ${\rm F}_i$ for $i\in\{1,\,2,\,3\}$,  we obtain
\eqref{3.6}, which completes the proof of Lemma \ref{l3.2}.
\end{proof}

Recall that for any $t,\,x,\,y\in(0, \fz)$,
the kernel $\plz(x, y)$ of $\plz$ satisfies that
\begin{equation}\label{3.10}
\begin{array}[b]{cl}
\plz(x, y)&=\dint_0^\fz e^{-tz}(xz)^{-\lz+1/2}J_{\lz-1/2}(xz)(yz)^{-\lz+1/2}
J_{\lz-1/2}(yz)\, dm_\lz(z)\\
&=\dfrac{2\lz t}{\pi}\dint_0^\pi\dfrac{(\sin\theta)^{2\lz-1}}
{(x^2+y^2+t^2-2xy\cos\theta)^{\lz+1}}\,d\theta;
\end{array}
\end{equation}
see \cite{bdt} or \cite{w}.
Moreover, $\{\plz\}_{t>0}$ is a contraction semigroup on $\lrz$
for all $r\in[1, \fz]$. Also, for any $f\in \lrz$ with $r\in[1, \fz]$,
the Poisson integral $u(t, x)\equiv \plz (f)(x)$ satisfies the differential equation
that for all $t,\,x\in (0, \fz)$,
\begin{equation}\label{3.11}
\pa^2_t u(t, x)+\pa^2_x u(t, x)+\frac{2\lz}{x}\pa_x u(t, x)=0.
\end{equation}
Moreover, let $v(t, x)\equiv \qlz(f)(x)$. Then $u$ and  $v$ satisfy
the following Cauchy-Riemann type equations:
\begin{equation}\label{3.12}
\pa_t v+\pa_x u=0,\quad \pa_tv-\pa_xu-\frac{2\lz}{x}v=0;
\end{equation}
see \cite{ms, bdt}.

The following lemma was proved in \cite{ms}.

\begin{lem}\label{l3.3}
Suppose that

(i) $u(t, x)$ is continuous in $t\in[0, \fz)$, $x\in\rr$ and even in $x$;

(ii) In the region where $u(t, x)>0$, $u$ is of class $C^2$ and satisfies
$\partial^2u/\partial t^2+\partial^2u/\partial x^2
+2\lz x^{-1}\partial u/\partial x\ge0$ there;

(iii) $u(0, x)=0;$

(iv) For some $r\in[1, \fz)$, there exists a positive constant $\wz C$ such that
\begin{equation}\label{3.13}
\sup_{0<t<\fz}\dint_0^\fz|u(t, x)|^r\,dm_\lz(x)\le \wz C<\fz.
\end{equation}
Then $u(t, x)\le0$.

\end{lem}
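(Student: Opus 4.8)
This is a Phragmén–Lindelöf/maximum-principle type assertion in the Bessel (half-line) setting, and the natural strategy is a truncation-plus-barrier argument on a large box. First I would fix the region $\Omega_{T,R}\equiv\{(t,x):0<t<T,\ 0<x<R\}$ and aim to show that $u\le\varepsilon$ on a slightly smaller box for every $\varepsilon>0$, after which letting the box exhaust the quadrant and $\varepsilon\to0$ gives $u\le0$. Since $u$ is even in $x$, the $x$-derivative term $2\lambda x^{-1}\partial_x u$ causes no trouble at $x=0$: even $C^2$ functions have $\partial_x u(t,0)=0$, so $2\lambda x^{-1}\partial_x u$ extends continuously across $x=0$ and the left face $x=0$ is an interior symmetry axis rather than a genuine boundary. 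Hence the only faces of $\Omega_{T,R}$ to control are $t=0$ (where $u=0$ by (iii)), $t=T$, and $x=R$.

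\textbf{The key steps.} Step one: reduce to the set $E\equiv\{u>0\}$, where by (ii) the operator $L u\equiv\partial_t^2u+\partial_x^2u+2\lambda x^{-1}\partial_x u\ge0$, i.e. $u$ is an $L$-subsolution on $E$. Step two: build a barrier. The function $w_\sigma(t,x)\equiv t\,e^{\sigma t}$ or, better, a function growing in $x$ as well — say $h(t,x)\equiv \eta\,(1+t)(1+x^2)^{\kappa}$ for small $\kappa>0$ and $\eta>0$ — should satisfy $L h\le 0$ (a supersolution), at least for an appropriate sign choice, because $\partial_t^2[(1+t)]=0$ while the $x$-part can be arranged to have $L$ acting with the right sign by choosing $\kappa$ small; the crucial point is to balance $\partial_x^2(1+x^2)^\kappa$ against $2\lambda x^{-1}\partial_x(1+x^2)^\kappa$, which for $\kappa$ near $0$ is dominated by lower-order terms. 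Step three: on the finite box, $u-h\le0$ on $\{t=0\}$ and on $\{x=R\}$, $\{t=T\}$ provided $h$ is large enough there; since $L(u-h)\ge0$ on $\{u>h\}\subset\{u>0\}=E$, the classical maximum principle on the box forces $u\le h$ everywhere in $\Omega_{T,R}$. Step four: here is where hypothesis (iv) enters, to kill the dependence on $R$: the uniform bound $\sup_t\int_0^\infty|u(t,x)|^r\,dm_\lambda(x)\le\widetilde C$ means $u$ cannot be large on a large $x$-set, so along a sequence $R_j\to\infty$ one can find "good" radii $R_j$ on which $\sup_t u(t,R_j)$ is controlled — indeed if $u(t,x)$ were $\gtrsim\delta$ on $x\in[R,2R]$ for a fixed $\delta$, then $\int_R^{2R}|u|^r\,dm_\lambda\gtrsim \delta^r R^{2\lambda+1}\to\infty$, contradicting \eqref{3.13}. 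This lets us choose the box size so that the barrier contribution from the $x=R$ face tends to $0$. Step five: let $R=R_j\to\infty$, then $T\to\infty$, then $\eta\to0$ (and $\kappa\to0$), to conclude $u(t,x)\le0$ at every fixed $(t,x)$.

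\textbf{The main obstacle.} The delicate point is the $x=R$ face combined with the barrier growth: I need a barrier $h$ that is a genuine $L$-supersolution on the whole quadrant and yet grows slowly enough in $x$ that the $L^r(dm_\lambda)$ bound \eqref{3.13} actually forces $\sup_t u(t,R_j)\le h(t,R_j)$ for a suitable sequence $R_j\to\infty$ without the barrier blowing up faster than I can afford. Getting the sign of $Lh\le0$ right at large $x$ (where $2\lambda x^{-1}\partial_x$ is lower order) versus small $x$ (where it can dominate $\partial_x^2$) is the computation that must be done carefully; a safe choice is to take $h$ essentially linear in $t$ plus a small multiple of a subharmonic-in-the-Bessel-sense profile, or simply to use that $L(t)=0$, $L(x^2)=2+4\lambda=2(2\lambda+1)>0$, so $-x^2$ is an $L$-supersolution, and combine $t$-growth with a carefully capped $x$-profile. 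A secondary (routine) subtlety is justifying the maximum principle up to the axis $x=0$: one removes a small strip $\{x<\rho\}$, applies the classical principle on $\{\rho<x<R\}$, and uses the evenness/continuity of $u$ and the continuity of $2\lambda x^{-1}\partial_x u$ across $x=0$ to let $\rho\to0$. Everything else — the passage $R_j\to\infty$, then $T\to\infty$, then $\eta\to0$ — is a standard exhaustion.
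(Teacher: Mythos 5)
First, a remark on the comparison: the paper does not prove this lemma at all --- it is quoted from Muckenhoupt--Stein \cite{ms} --- so your sketch has to be judged on its own merits, and as written it has a genuine gap at exactly the point where hypothesis (iv) must enter. Your Step four does not deliver what the box argument needs. The bound \eqref{3.13} is a bound on each time slice separately: for a fixed $t$ it indeed prevents $u(t,\cdot)\ge\delta$ on all of $[R,2R]$, but it does not produce a single radius $R_j$ with $\sup_{0<t<T}u(t,R_j)$ small, because the (small-measure) set where $u(t,\cdot)$ is large may move with $t$; a travelling bump that sweeps across $[R,2R]$ as $t$ varies is perfectly compatible with \eqref{3.13} and leaves no ``good'' radius. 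Integrating in $t$ and pigeonholing only gives control of $\int_0^T|u(t,R_j)|^r\,dt$, not the pointwise-in-$t$ bound the maximum principle on the lateral face requires. Moreover the top face $t=T$ is not controlled at all: nothing in \eqref{3.13} forces $u(T,x)\le \eta(1+T)(1+x^2)^\kappa$ pointwise. The missing idea is a device converting the slice-wise $L^r(dm_\lambda)$ bound into pointwise information before any comparison argument: for instance, extend $u^+\equiv\max(u,0)$ by zero to $t<0$ and evenly in $x$; since $u$ satisfies the differential inequality on $\{u>0\}$, $u^+$ enjoys a sub-mean-value inequality over $(t,x)$-balls with respect to $dt\,dm_\lambda(x)$, and averaging over a ball of radius $\rho$ centered at $(t,x)$ together with H\"older and \eqref{3.13} gives $u^+(t,x)\lesssim \rho^{-1/r}\,x^{-2\lambda/r}$ for $x>2\rho$ and $u^+(t,x)\lesssim\rho^{-(2\lambda+1)/r}$ for $\rho$ large; hence $u^+\to0$ as $x\to\infty$ uniformly in $t\ge 0$ and as $t\to\infty$ uniformly in $x$. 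With that pointwise decay, the maximum principle on $\{0<t<T,\ \rho<|x|<R\}$ (with the reflection across $x=0$ you describe, which is fine) closes immediately and no growing barrier is needed; without it, your exhaustion cannot close.

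There is also a sign problem with the barrier itself. Because of the drift $2\lambda x^{-1}\partial_x$, profiles increasing in $x$ pick up a positive contribution: a direct computation gives $L\bigl[(1+t)(1+x^2)^{\kappa}\bigr]=2\kappa(1+t)(1+x^2)^{\kappa-2}\bigl[(1+2\lambda)+(2\lambda-1+2\kappa)x^2\bigr]$, which is nonnegative everywhere when $\lambda\ge 1/2$ (and positive near $x=0$ for every $\lambda>0$), so your $h$ is a subsolution precisely where you need $Lh\le0$, and $L(u-h)\ge0$ fails on $\{u>h\}$. The fallback you mention, $-x^2$ (or any capped/decreasing supersolution), is of no use because a barrier that does not grow on the face $x=R$ cannot dominate $u$ there; and any $t$-independent nonnegative supersolution $v$ with $v'\ge0$ satisfies $(x^{2\lambda}v')'\le0$, hence $v'(x)\le Cx^{-2\lambda}$, so for $\lambda\ge1/2$ it is bounded --- the class of admissible growing barriers is much poorer than in the Euclidean half-plane. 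So the barrier route cannot be patched by a cleverer choice of $h$ alone; the pointwise control extracted from \eqref{3.13} (as above, or as in Muckenhoupt--Stein's original argument) is the essential ingredient, and it is absent from the proposal.
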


For any function $u$ on $(0, \fz)\times \rr$ and
$x\in (0, \fz)$, let $u^\ast(x)\equiv\sup_{t>0}|u(t, x)|$.
We then have a variant of Lemma 2.38 in \cite{bdt} as follows.

\begin{lem}\label{l3.4}
Let $p\in ((2\lz)/(2\lz+1), 1]$.
Assume that $u(t, x)$ and $v(t, x)$ are, respectively,
even and odd (with respect to $x$)
real valued $C^2$ functions on $(0, \fz)\times \rr$ and satisfy
the following Cauchy-Riemann type equation \eqref{3.12}.
Let $F\equiv(u^2+v^2)^{1/2}$ and suppose that
\begin{equation}\label{3.14}
\sup_{t>0}\dint_0^\fz [F(t, x)]^p\,dm_\lz(x)<\fz.
\end{equation}
Then $u^*\in \lpz$ and there exists a positive constant
$C$, depending only on $\lz$ and $p$, such that
\begin{equation*}
\|u^*\|_\lpz\le C\sup_{t>0}\dint_0^\fz [F(t, x)]^p\,dm_\lz(x).
\end{equation*}
\end{lem}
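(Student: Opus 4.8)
The key idea, following Muckenhoupt--Stein and Betancor et al., is to control $u^\ast$ pointwise by the Hardy--Littlewood maximal function of $F(t,\cdot)$ for fixed $t$, uniformly in $t$, via a subharmonicity estimate for a suitable power of $F$. Since $p\in((2\lz)/(2\lz+1),1]$, one chooses $q\in(0,p]$ close enough to $2\lz/(2\lz+1)$ so that $q>2\lz/(2\lz+1)$ and works with $G\equiv F^q$. The first step is to verify that $G=F^q$ is subharmonic (in the sense of Lemma \ref{l3.3}(ii)) with respect to the operator $\pa_t^2+\pa_x^2+\frac{2\lz}{x}\pa_x$ in the region where $F>0$: a direct computation using the Cauchy--Riemann system \eqref{3.12} gives, for $q>2\lz/(2\lz+1)$, that
$$\lf(\pa_t^2+\pa_x^2+\frac{2\lz}{x}\pa_x\r)F^q\ge 0$$
wherever $F>0$, the crucial point being that the Cauchy--Riemann relations force $|\nabla u|^2+|\nabla v|^2\ge c(|\nabla F|^2)$ with the right constant, and the extra term $\frac{2\lz}{x}v\pa_x v$ from \eqref{3.12} can be absorbed precisely when $q$ exceeds the critical exponent. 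This subharmonicity, together with the evenness of $F$ in $x$, is exactly what is needed to invoke a mean-value/Poisson majorization.

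Next, fix $(t_0,x_0)$ with $x_0>0$. Apply Lemma \ref{l3.3} (with the roles adjusted) to the function $w(t,x)\equiv G(t+t_0,x)-P^{[\lz]}_t\ast_\lz(G(t_0,\cdot))(x)$ — more precisely, one compares $G(t_0+\cdot,\cdot)$ on the half-space with its Poisson extension of the boundary values $G(t_0,\cdot)$, using that $G$ is subharmonic and that the $L^{p/q}$ (i.e.\ essentially $L^1$ after the power, but one must be careful as $p/q$ may be less than $1$) norms of $F(t,\cdot)$ are uniformly bounded by \eqref{3.14}. The growth hypothesis \eqref{3.13} in Lemma \ref{l3.3} has to be checked for $w$: this follows from \eqref{3.14} raised to the appropriate power and the $L^1(dm_\lz)$-boundedness of the Poisson semigroup $\{\plz\}$. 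Lemma \ref{l3.3} then yields
$$G(t_0,x_0)\le \plz\ast_\lz\lf(G(t_0,\cdot)\r)(x_0)\quad\text{for all }t_0,x_0>0,$$
i.e.\ $[F(t_0,x_0)]^q\le \plz\ast_\lz([F(t_0,\cdot)]^q)(x_0)$. Since $|u|\le F$, this gives the pointwise bound $|u(t_0,x_0)|^q\le \plz\ast_\lz([F(t_0,\cdot)]^q)(x_0)$, and taking the supremum over $t_0>0$,
$$[u^\ast(x_0)]^q\le \sup_{t_0>0}\plz\ast_\lz\lf([F(t_0,\cdot)]^q\r)(x_0).$$

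Finally, dominate the Poisson maximal operator by the Hardy--Littlewood maximal operator $\cm_{HL}$ on the space of homogeneous type $((0,\fz),\rho,dm_\lz)$: standard estimates (using that the Poisson kernel $P^{[\lz]}$ decays like an approximate identity, which can be read off from \eqref{3.10} and the size estimates already established in the proof of Lemma \ref{l2.1}) give $\sup_{t_0>0}\plz\ast_\lz(h)(x_0)\ls \cm_{HL}(h)(x_0)$ for nonnegative $h$. Hence $[u^\ast(x_0)]^q\ls \sup_{t_0>0}\cm_{HL}([F(t_0,\cdot)]^q)(x_0)$, and raising to the power $p/q$ and integrating, then using the $L^{p/q}$-boundedness of $\cm_{HL}$ (valid since $p/q\ge 1$ by the choice $q\le p$) and the uniformity in $t_0$ from \eqref{3.14}, yields
$$\|u^\ast\|_\lpz^p=\int_0^\fz [u^\ast(x)]^p\,dm_\lz(x)\ls \sup_{t_0>0}\int_0^\fz [F(t_0,x)]^p\,dm_\lz(x)<\fz,$$
which is the claimed estimate (after taking $p$-th roots, or stated directly as above with the convention in the lemma). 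The main obstacle is the subharmonicity step: pinning down the exact range of $q$ for which $F^q$ is subharmonic under the Bessel-type Cauchy--Riemann system \eqref{3.12} — the term $\frac{2\lz}{x}v\,\pa_x v$ is what produces the nonstandard critical exponent $2\lz/(2\lz+1)$ rather than the Euclidean $(n-1)/n$, and the computation must be done carefully to see that $q>2\lz/(2\lz+1)$ suffices and is sharp; everything after that is a routine application of Lemma \ref{l3.3}, the Poisson–maximal–function majorization, and boundedness of $\cm_{HL}$.
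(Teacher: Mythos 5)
The first half of your outline follows the paper's route: subharmonicity of $F^q$ for $q>(2\lz)/(2\lz+1)$ (which the paper does not recompute but simply quotes as Lemma 5 of Muckenhoupt--Stein, so this is not the real crux), and then the comparison through Lemma \ref{l3.3} applied to $[F(t+\dz,\cdot)]^q-\plz([F(\dz,\cdot)]^q)$, with the growth condition \eqref{3.13} checked in $L^r$, $r=p/q$, exactly as in the paper. One correction already here: you must take $q<p$ strictly, so that $r=p/q>1$; your parenthetical ``valid since $p/q\ge 1$'' does not suffice, because the Hardy--Littlewood maximal operator (and likewise the Poisson maximal operator) is not bounded on $L^1$, and the paper's argument needs $r>1$ as well (for reflexivity of $\lrz$).

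The genuine gap is in your final step. From the majorization you obtain, for each fixed $t_0$, control of $\sup_{t>t_0}|u(t,x)|^q$ by the Hardy--Littlewood maximal function $\cm\bigl([F(t_0,\cdot)]^q\bigr)(x)$, and hence the pointwise bound $[u^\ast(x)]^q\ls\sup_{t_0>0}\cm\bigl([F(t_0,\cdot)]^q\bigr)(x)$, in which the boundary datum varies with $t_0$. When you then ``raise to the power $p/q$ and integrate, using the uniformity in $t_0$'', you are implicitly asserting
\begin{equation*}
\dint_0^\fz\Bigl[\sup_{t_0>0}\cm\bigl([F(t_0,\cdot)]^q\bigr)(x)\Bigr]^{p/q}\,\dmz(x)
\ls\sup_{t_0>0}\dint_0^\fz\Bigl[\cm\bigl([F(t_0,\cdot)]^q\bigr)(x)\Bigr]^{p/q}\,\dmz(x),
\end{equation*}
that is, an interchange of the supremum in $t_0$ with the integral, which goes in the wrong direction and is false in general; nothing in the hypotheses makes $t_0\mapsto F(t_0,\cdot)$ monotone or pointwise convergent, so the family of majorants cannot simply be replaced by its ``uniform'' bound. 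This is precisely what the paper's proof handles by a weak-compactness argument: since $r=p/q>1$, the space $\lrz$ is reflexive, so one extracts $\dz_k\downarrow0$ and $h\in\lrz$ with $F_{\dz_k}^q\rightharpoonup h$ weakly, $\|h\|_\lrz^r\le\sup_{t>0}\int_0^\fz [F(t,x)]^p\,\dmz(x)$, then passes to the limit in the majorization (using the continuity of $F$ and $\plz(x,\cdot)\in\lrpz$) to get $[F(t,x)]^q\le\plz(h)(x)$ for all $t>0$ with a single, $t$-independent boundary function $h$, and only then applies the $L^r$-boundedness of the Poisson maximal operator to this one $h$. You need to insert this limiting step (or some substitute producing a $t$-independent majorant); as written, the claimed estimate for $\|u^\ast\|_\lpz$ does not follow from your pointwise bound.
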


\begin{proof}
By \cite[Lemma 5]{ms}, for any $p\ge (2\lz)/(2\lz+1)$, we have
that for all $(t, x)\in (0, \fz)\times \rr$ such that $F(t, x)>0$,
\begin{equation}\label{3.15}
\frac{\pa^2 [F(t, x)]^p}{\pa x^2}+\frac{\pa^2 [F(t, x)]^p}{\pa t^2}
+\frac{2\lz}{x}\frac{\pa [F(t, x)]^p}{\pa x}\ge0.
\end{equation}
Let $p\in ((2\lz)/(2\lz+1), 1]$ and $F_\dz(x)\equiv F(\dz, x)$
for any $(\dz, x)\in (0, \fz)\times\rr$. Take $q\in ((2\lz)/(2\lz+1), p)$
and $r\equiv p/q$. An application of \eqref{3.14} leads to that
for all $\dz\in (0, \fz)$, $F^q_\dz\in\lrz$.

We claim that for any $t,\,\dz,\,x\in(0, \fz)$,
\begin{equation}\label{3.16}
[F(t+\dz, x)]^q\le \plz(F^q_\dz)(x).
\end{equation}
To see this, let
$P_0^{[\lz]}(F_\dz^q)\equiv \lim_{t\to0_+}\plz(F_\dz^q)$
and define $V_\dz(t, x)$ by setting, for all $t\in [0, \fz)$ and
$x\in \rr$,
$$V_\dz(t, x)\equiv [F(t+\dz, x)]^q-\wz P^{[\lz]}_t(F^q_\dz)(x),$$
where $\wz P^{[\lz]}_t(F^q_\dz)$ is the odd extension to
$\rr$ of $\plz(F^q_\dz)$. To show \eqref{3.16},
it suffices to show that Lemma \ref{l3.3} holds for $V_\dz$.
In fact, it is obvious that $V_\dz$ is continuous on $[0, \fz)\times \rr$.
Since $F$ is nonnegative, $\plz(F_\dz^q)$ is also nonnegative.
Thus, if $(t, x)\in (0, \fz)$ such that
$V_\dz(t, x)>0$, then $[F(t+\dz, x)]^q>0$, which together with
\eqref{3.11} and \eqref{3.15} implies that
\begin{equation*}
\frac{\pa^2 V_\dz(t, x)}{\pa x^2}+\frac{\pa^2 V_\dz(t, x)}{\pa t^2}
+\frac{2\lz}{x}\frac{\pa V_\dz(t, x)}{\pa x}\ge0
\end{equation*}
and hence Lemma \ref{l3.3}(ii). Moreover,
by the continuity of $F$ and the fact that
$\plz f\to f$ in $\lpz$ for any $f\in \lpz$ with $p\in [1, \fz$),
we see that $P_0^{[\lz]}(F_\dz^q)=F_\dz^q$,
which yields that for any $x\in \rr$, $V_\dz(0, x)=0$. Thus,
Lemma \ref{l3.3}(iii) holds.

Finally, by the uniform boundedness of $\plz$ on $\lrz$ for $r\in [1, \fz]$ and
\eqref{3.14}, we have
\begin{equation*}
\begin{array}[t]{cl}
&\dsup_{0<t<\fz}\dint_0^\fz|V_\dz(t, x)|^r\,dm_\lz(x)\\
&\quad\ls\dsup_{0<t<\fz}\dint_0^\fz[F(t, x)]^p\,dm_\lz(x)
+\dsup_{0<t<\fz}\dint_0^\fz\lf[\plz(F_\dz^q)(x)\r]^r\,dm_\lz(x)\\
&\quad\ls\dsup_{t>0}\dint_0^\fz [F(t, x)]^p\,dm_\lz(x)<\fz.
\end{array}
\end{equation*}
Therefore, \eqref{3.13} holds for $V_\dz$ and consequently, the
claim follows from Lemma \ref{l3.3}.

Since $\{F_\dz^q\}_{\dz>0}$ is bounded on $\lrz$, and
$\lrz$ is reflexive, there exists a sequence $\dz_k\downarrow0$ and
 $h\in \lrz$ such that $F_{\dz_k}^q$ converges weakly to $h$
 in $\lrz$ as $k\to\fz$. Moreover, by the H\"older inequality, we see that
\begin{equation}\label{3.17}
\begin{array}[b]{cl}
\|h\|_\lrz^r&=
\lf\{\dsup_{\|g\|_\lrpz\le1}\lf|\dint_0^\fz g(x)h(x)\,dm_\lz(x)\r|\r\}^r\\
&=\lf\{\dsup_{\|g\|_\lrpz\le1}\lim_{k\to\fz}
\lf|\dint_0^\fz g(x)[F_{\dz_k}(x)]^q\,dm_\lz(x)\r|\r\}^r\\\
&\le \dlimsup_{k\to\fz}\lf\|F_{\dz_k}^q\r\|_\lrz^r\le
\dsup_{t>0}\dint_0^\fz [F(t, x)]^p\,dm_\lz(x).
\end{array}
\end{equation}

Since $F$ is continuous, then for any $x\in (0, \fz)$,
$[F(t+\dz_k, x)]^q\to [F(t, x)]^q$ as $k\to\fz$.
Observe that for any fixed $x\in (0, \fz)$, $\plz(x, \cdot)\in\lrpz$.
From this fact, we deduce that for each $x\in (0, \fz)$,
$\plz(F^q_{\dz_k})(x)\to \plz(h)(x)$
as $k\to\fz$.
Thus, by these facts and \eqref{3.16}, we have that
for any $t,\,x\in (0, \fz)$,
\begin{equation*}
[F(t, x)]^q=\dlim_{k\to\fz}[F(t+\dz_k, x)]^q\le
\dlim_{k\to\fz}\plz(F^q_{\dz_k})(x)=\plz(h)(x).
\end{equation*}
Therefore,
\begin{equation*}
[u^\ast(x)]^q=\lf[\dsup_{0<t<\fz} u(t, x)\r]^q
\le\dsup_{0<t<\fz}\lf[F(t, x)\r]^q\le\dsup_{0<t<\fz}\plz(h)(x).
\end{equation*}
By this together with (c) in \cite[p.\,86]{ms} and
\eqref{3.17}, we then have
\begin{equation*}
\begin{array}[t]{cl}
\|u^\ast\|_\lpz^p&=\|[u^\ast]^q\|_\lrz^r\ls\lf\|\dsup_{0<t<\fz}\plz(h)\r\|^r_\lrz\\
&\ls \|h\|^r_\lrz\ls\dsup_{t>0}\dint_0^\fz [F(t, x)]^p\,dm_\lz(x),
\end{array}
\end{equation*}
which completes the proof of Lemma \ref{l3.4}.
\end{proof}

\begin{proof}[Proof of Theorem \ref{t1.2}]
We first assume that $f\in \hpz$.
By Theorem \ref{t1.1}, we have that
$\sup_{\dz>0}|f\slz \phi_\dz|\in \lpz$ and  for all $\dz\in(0, \fz)$,
$$\lf\|f\slz \phi_\dz\r\|_\lpz\ls \|f\|_\hpz.$$
Thus, \eqref{1.10} is reduced to showing that for all $\dz\in(0, \fz)$,
\begin{equation}\label{3.18}
\lf\|\riz\lf(f\slz \phi_\dz\r)\r\|_\lpz\ls \|f\|_\hpz.
\end{equation}
To this end, for each $\dz\in(0, \fz)$, let
$\Phi_\dz(f)\equiv f\slz \phi_\dz$.
By an argument similar to that used in the estimates of
\eqref{3.5} and \eqref{3.6},
we see that for any $\hpz$-atom $a$, $\Phi_\dz(a)$ satisfies that
for some interval $I_{d_\lz}$,
\begin{equation*}
\begin{array}[t]{cl}
\lf\|\Phi_\dz(a)\r\|_\ltz \ls [m_\lz(I_{d_\lz})]^{1/2-1/p},
\end{array}
\end{equation*}
and for each $k\in\nn$,
\begin{equation*}
\begin{array}[t]{cl}
\lf\|\lf[\Phi_\dz(a)\r]\chi_{R_k\lf(I_{d_\lz}\r)}\r\|_\ltz
\ls (2^k)^{-\frac {\lz+3/2}{2\lz+1}}[m_\lz(I_{d_\lz})]^{1/2-1/p}.
\end{array}
\end{equation*}
Observe that \eqref{2.2} and the Fubini theorem imply that
$\int_0^\fz\Phi_\dz(a)(x)\,dm_\lz(x)=0$. These facts further
yield that there exists a positive constant $C$,
independent of $a$ and $\dz$, such that
$\Phi_\dz(a)/C$ is a $(p, 2, \eta)$-molecule
with $\eta\equiv\{2^{-\frac k{2\lz+1}}\}_{k\in\nn}$
as in Definition \ref{d3.1}.
Moreover, by this together with \cite[Theorem 2.2]{hyz}, we
see that $\Phi_\dz(a)\in  H^p((0, \fz), d_\lz, dm_\lz)$ and
$\|\Phi_\dz(a)\|_{ H^p((0,\, \fz),\, d_\lz,\, dm_\lz)}\ls 1$.
This combined with \cite[Theorem 2.1]{hyz} further implies that
$\Phi_\dz(a)\in  H^p((0, \fz), dm_\lz)$ and
$\|\Phi_\dz(a)\|_{H^p((0,\, \fz), \,dm_\lz)}\ls 1.$
From this and \cite[Theorem 1.1]{yz08}, we deduce that
$\{\Phi_\dz\}_{\dz>0}$ is bounded on $\hpz$
uniformly on $\dz\in (0,\fz)$.
On the other hand, by Lemma \ref{l3.2}, we obtain that $\riz$ is
bounded from $\hpz$ to $\lpz$ for all
$p\in((2\lz+1)/(2\lz+2), 1]$. Combining these two facts, we obtain \eqref{3.18}.

We now assume that \eqref{1.10} holds. For $\dz,\,t,\, x\in (0, \fz)$,
let $u(t, x)\equiv\plz(f\slz \phi_\dz)(x)$
and $v(t, x)\equiv\qlz(f\slz \phi_\dz)(x)$.
Moreover, define $F_\dz\equiv(\wz u^2+\wz v^2)^{1/2}$,
where $\wz u$ and $\wz v$ are even extension and odd
extension of $u$ and $v$ with
respect to $x$  to $\rr$, respectively.
Then by \eqref{3.10}, \eqref{1.9} and \eqref{3.12}
together with the definitions of $\wz u$ and
$\wz v$, we have that $\wz u,\,\wz v$ are $C^2$ functions on $(0, \fz)\times \rr$
satisfying the Cauchy-Riemann type equations \eqref{3.12}.

We prove that for all $x\in (0, \fz)$,
\begin{equation}\label{3.19}
[F_\dz(t,x)]^p\le P^{[\lz]}_t([F_\dz(0, \cdot)]^p)(x).
\end{equation}
To see this, for all $(t, x)\in[0, \fz)\times\rr$, define
$$V_\dz(t, x)\equiv[F_\dz(t,x)]^p- \wz P^{[\lz]}_t([F_\dz(0, \cdot)]^p)(x),$$
where $\wz P^{[\lz]}_t([F_\dz(0, \cdot)]^p)$ is the even extension to
$\rr$ of $P^{[\lz]}_t([F_\dz(0, \cdot)]^p)$.
As in the proof of Lemma \ref{l3.4}, \eqref{3.19}
is reduced to showing that $V_\dz$ satisfies Conditions (i)-(iv)
of Lemma \ref{l3.3}. Observe that $V_\dz$ is continuous on $[0, \fz)\times \rr$.
Moreover, we see that
$V_\dz$ satisfies (ii) and (iii) of Lemma \ref{l3.3}.
It remains to show that $V_\dz$ satisfies \eqref{3.13}.
Since the assumption that
$f$ is restricted at infinity implies that $f\slz \phi_\dz\in \lrz$
for all $\dz\in (0, \fz)$ and $r\in[p, \fz]$,
by the uniform boundedness of $\plz$ and $\qlz$ on $\ltz$ (see \cite[p.\,87]{ms}),
we further obtain that
\begin{equation}\label{3.20}
\begin{array}[b]{cl}
\dint_0^\fz [F_\dz(t, x)]^2\,dm_\lz(x)&=
\dint_0^\fz \lf[\lf|\plz(f\slz \phi_\dz)(x)\r|^2
+\lf|\qlz(f\slz \phi_\dz)(x)\r|^2\r]\,dm_\lz(x)\\
&\ls\dint_0^\fz\lf[f\slz \phi_\dz(x)\r]^2\,dm_\lz(x)<\fz.
\end{array}
\end{equation}

Observe that for almost every $x\in (0, \fz)$,
$$[F_\dz(0, x)]^2=|f\slz \phi_\dz(x)|^2+|\riz(f\slz \phi_\dz)(x)|^2.$$
This fact together with the boundedness of $\plz$ on $L^{2/p}((0, \fz), dm_\lz)$ and
 the boundedness of $\riz$ on $\ltz$ implies that
\begin{equation*}
\begin{array}[t]{cl}
\dint_0^\fz\lf[\plz(F_\dz^p(0, \cdot))(x)\r]^{2/p}\,dm_\lz(x)
&\ls\dint_0^\fz|F_\dz(0, x)|^2\,dm_\lz(x)\\
&\ls \dint_0^\fz\lf[\lf|f\slz \phi_\dz(x)\r|^2+
\lf|\riz\lf(f\slz \phi_\dz\r)(x)\r|^2\r]\,dm_\lz(x)\\
&\ls\dint_0^\fz\lf|f\slz \phi_\dz(x)\r|^2\,dm_\lz(x),
\end{array}
\end{equation*}
from which and \eqref{3.20}, we further deduce that
\begin{equation*}
\begin{array}[t]{cl}
&\dsup_{0<t<\fz}\dint_0^\fz|V_\dz(t, x)|^{2/p}\,dm_\lz(x)\\
&\quad\ls \dsup_{0<t<\fz}\dint_0^\fz \lf\{[F_\dz(t, x)]^2
+\lf[\plz(F_\dz^p(0, \cdot))(x)\r]^{2/p}\r\}\,dm_\lz(x)\\
&\quad\ls\dint_0^\fz\lf|f\slz \phi_\dz(x)\r|^2\,dm_\lz(x)<\fz.
\end{array}
\end{equation*}
Therefore, $V_\dz$ satisfies \eqref{3.13} and hence \eqref{3.19}
follows from Lemma \ref{l3.3} immediately.

Using \eqref{3.19}, \eqref{1.10} and the uniform boundedness of
$\{\plz\}_{t>0}$ on $\loz$,
we see that
\begin{equation}\label{3.21}
\begin{array}[b]{cl}
\dint_0^\fz[F_\dz(t, x)]^p\,dm_\lz(x)
&\le \dint_0^\fz P^{[\lz]}_t([F_\dz(0, \cdot)]^p)(x)\,dm_\lz(x)\\
&\ls\dint_0^\fz|F_\dz(0, x)|^p\,dm_\lz(x)\\
&\sim\dint_0^\fz\lf[\lf|f\slz \phi_\dz(x)\r|
+\lf|\riz\lf(f\slz \phi_\dz\r)(x)\r|\r]^p\,dm_\lz(x)\ls1.
\end{array}
\end{equation}

We claim that for each $t$, $x\in(0, \fz)$,
$F_\dz(t, x)\to F(t, x)$ as $\dz\to0$, where
$$F(t, x)\equiv \|\vz\|_\loz\{[\plz(f)(x)]^2+[\qlz(f)(x)]^2\}^{1/2}.$$
Indeed, observe that for any fixed $x\in (0, \fz)$,
$\plz(x, \cdot)$, $\qlz(x, \cdot)\in\cg(1,1)$. Thus
we only need to show that for all $\vz\in \zlz$,
$f\slz\vz_\dz\to f\|\vz\|_\loz$ in $(\cg(1,1))'$
as $\dz\to0$. To this end, let
$\psi\in \cg(1,1)$. Then by \eqref{2.1}, we have
that when $\dz\to0$,
\begin{eqnarray*}
&&\dint_0^\fz f\slz\vz_\dz(x)\psi(x)\,dm_\lz(x)\\
&&\quad=\dint_0^\fz\dint_0^\fz\dint_0^\fz
D(x, y, z)\vz_\dz(z)\,dm_\lz(z) f(y)\,dm_\lz(y)\psi(x)\,dm_\lz(x)\\
&&\quad=\dint_0^\fz\dint_0^\fz\dint_0^\fz D(x, y, z)\psi(x)\,dm_\lz(x)
\vz_\dz(z)\,dm_\lz(z) f(y)\,dm_\lz(y)\\
&&\quad=C_\lz\dint_0^\fz\dint_0^\fz\dint_0^\pi
\dz^{-2\lz-1}\psi(\sqrt{y^2+z^2-2yz\cos\theta})\\
&&\quad\quad\times\vz\lf(\frac{z}{\dz}\r)
(\sin\theta)^{2\lz-1}\,d\theta \,dm_\lz(z)\, f(y)\,dm_\lz(y)\\
&&\quad=C_\lz\dint_0^\fz\dint_0^\fz\dint_0^\pi
\psi(\sqrt{y^2+\dz^2z^2-2\dz yz\cos\theta})\vz(z)
(\sin\theta)^{2\lz-1}\,d\theta \,dm_\lz(z)\, f(y)\,dm_\lz(y)\\
&&\quad\to C_\lz\dint_0^\fz\dint_0^\fz\dint_0^\pi
\psi(y)\vz(z)(\sin\theta)^{2\lz-1}\,d\theta \,dm_\lz(z)\, f(y)\,dm_\lz(y)\\
&&\quad=\|\vz\|_\loz\dint_0^\fz\psi(y)f(y)\,dm_\lz(y),
\end{eqnarray*}
where $C_\lz\equiv\frac{\bgz(\lz+1/2)}{\bgz(\lz)\sqrt{\pi}}$.
This implies the claim.

By this claim together with \eqref{3.21} and
the Fatou lemma, we further have that
\begin{equation*}
\dsup_{0<t<\fz}\dint_0^\fz|F(t, x)|^p\,dm_\lz(x)\ls1,
\end{equation*}
from which together with  Lemma \ref{l3.4}, we deduce that
\begin{equation*}
\dint_0^\fz\dsup_{0<t<\fz}\lf|\plz(f)(x)\r|^p\,dm_\lz(x)\ls1.
\end{equation*}
Therefore, by Theorem \ref{t1.1}, we have $f\in \hpz$
and hence complete the proof of Theorem \ref{t1.2}.
\end{proof}

\medskip

\noindent{\bf Acknowledgement}

\medskip

\noindent Dachun Yang would like to thank
Professor Jorge Juan Betancor for some helpful discussions
on the subject of this paper. The first author is supported by the National
Natural Science Foundation (Grant No. 10871025) of China.

%
%
%
%
%
%
%
%
%
%
%

\end{document}